\newcommand{\C}{{\mathbb C}}
\newcommand{\R}{{\mathbb R}}
\newcommand{\wt}{{\rm wt}}
\newcommand{\GL}{{\rm GL}}
\DeclareMathOperator{\Harm}{Harm}
\newtheorem{Thm}{Theorem}[section]
\newtheorem{Lem}[Thm]{Lemma}
\newtheorem{Prop}[Thm]{Proposition}
\theoremstyle{definition}
\newtheorem{Def}[Thm]{Definition}
\newtheorem{Rem}[Thm]{Remark}
\begin{document}

\title{An upper bound of the value of $t$ of the support $t$-designs of extremal binary doubly even 
self-dual codes}

\author{
Tsuyoshi Miezaki\thanks{
Faculty of Education, Art and Science, 
Yamagata University, 
Yamagata 990--8560, Japan. 
email: miezaki@e.yamagata-u.ac.jp
}
and 
Hiroyuki Nakasora\thanks{
Graduate School of Natural Science and Technology, 
Okayama University, Okayama 700-8530, Japan. 
email: nakasora@math.okayama-u.ac.jp
}
}

\date{}

\maketitle

\begin{abstract}
Let $C$ be an extremal binary doubly even self-dual code of length $n$ 
and $s(C)$ denote the largest integer $t$ such that the support design of $C$ holds a $t$-design for some weight. 
In this paper, we prove $s(C) \leq 7$.
\end{abstract}

\paragraph{Keywords:} self-dual codes, $t$-designs, Assmus--Mattson theorem, harmonic weight enumerators.











\setcounter{section}{+0}
\section{Introduction}

Let $C$ be an extremal binary doubly even self-dual code (Type II code) of length $n$. 
It was shown by Zhang \cite{Zhang(1999)} that $C$ does not exist if $n=24m$ ($m \geq 154)$, 
$24m+8$ ($m \geq 159$), $24m+16$ ($m \geq 164$).
A $t$-$(v,k,{\lambda})$ design is a pair 
$\mathcal{D}=(X,\mathcal{B})$, where $X$ is a set of points of 
cardinality $v$, and $\mathcal{B}$ a collection of $k$-element subsets
of $X$ called blocks, with the property that any $t$ points are 
contained in precisely $\lambda$ blocks.
It follows that every
$i$-subset of points $(i \leq t)$ is contained in exactly
$\lambda_{i}= \lambda \binom{v-i}{t-i} / \binom{k-i}{t-i}$ blocks. 
The support $\mbox{supp} (c)$ of a codeword $c=(c_{1}, \dots, c_{n}) \in C$ 
is 
the set of indices of its nonzero coordinates: $\mbox{supp} (c) = \{ i : c_{i} \neq 0 \}$. 
The support design of $C$ for a given nonzero weight $w$ 
($w \equiv 0 \pmod 4$ and $4 \lfloor n/24 \rfloor +4 \leq w \leq n-(4 \lfloor n/24 \rfloor +4)$)
is the design 
for which the points are the $n$ coordinate indices, and the blocks are the supports of all codewords of weight $w$. 
Let $D_{w}$ be the support design of $C$ for a weight $w$. 
Then it is known from the Assmus--Mattson theorem \cite{assmus-mattson} 
that $D_{w}$ of all weights becomes a $5$-, $3$- and $1$-design for $n=24m$, $24m+8$ and $24m+16$, 
respectively. 
Note that no $t$-design for $t \geq 6$ has yet been obtained 
from the support designs for codes. 

%
Let 
\begin{align*}
s(C)&:=\max\{t\in \mathbb{N}\mid \exists w; \mbox{ s.t.~} 
D_{w} \mbox{ is a } t\mbox{-design}\},\\
\delta(C)&:=\max\{t\in \mathbb{N}\mid \forall w; 
D_{w} \mbox{ is a } t\mbox{-design}\}. 
\end{align*}
Note that $\delta(C)\leq s(C)$. 
The following theorem gives the lower bound of $\delta(C)$ due to Janusz \cite{Janusz}. 

\begin{Thm}\label{thm:Janusz}
Let $C$ be an extremal binary doubly even self-dual code of length $n=24m+8r$, $r=0,1$ or $2$.
Then either $\delta(C) \geq 7-2r$, or 
$\delta(C)= 5-2r$ and there is no nontrivial weight $w$ such that $D_{w}$ holds a $(1+ \delta(C))$-design. 
\end{Thm}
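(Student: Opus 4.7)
The natural approach is via Bachoc's harmonic weight enumerator. For each discrete harmonic function $f\in\Harm_k$ on the coordinate set $[n]$ define
\[
W_{C,f}(x,y)\;=\;\sum_{c\in C}\widetilde{f}\bigl(\supp(c)\bigr)\,x^{n-\wt(c)}y^{\wt(c)},
\]
where $\widetilde f$ is the natural extension of $f$ to subsets of arbitrary size. Bachoc's criterion says: $D_w$ is a $t$-design if and only if $[y^w]W_{C,f}=0$ for every $f\in\Harm_k$ with $1\leq k\leq t$. This rephrases the $t$-design property as the vanishing of certain coefficients of an explicit family of polynomials.

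For a Type II code, $W_{C,f}$ is a relative invariant of the MacWilliams reflection group with the appropriate character, and the Gleason-type theorem for harmonic weight enumerators of Type II codes places it in a finite-dimensional module over the Type II invariant ring $\mathbb{C}[W_{e_8},\,W_{g_{24}}]$. Imposing extremality on $C$ — vanishing of $[y^w]W_{C,f}$ for $0<w<4\lfloor n/24\rfloor+4$ — cuts the module down to a one-dimensional subspace in the degrees that concern us, yielding
\[
W_{C,f}(x,y)\;=\;c_f\,F_{n,k}(x,y),
\]
where $F_{n,k}$ is a universal polynomial and $c_f\in\mathbb{C}$ depends linearly on $f$. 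At degrees $k\leq 5-2r$ the relevant module turns out to be trivial, so $W_{C,f}\equiv 0$ automatically; this recovers the Assmus--Mattson baseline $\delta(C)\geq 5-2r$.

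The plan then has two steps. \textbf{Step 1} (degree $6-2r$): show that every coefficient $[y^w]F_{n,6-2r}$ is nonzero for every admissible $w$, i.e.\ every $w\equiv 0\pmod 4$ with $4\lfloor n/24\rfloor+4\leq w\leq n-4\lfloor n/24\rfloor-4$. Under this nonvanishing, the existence of a single admissible $w$ with $D_w$ a $(6-2r)$-design forces $c_f=0$ for every $f\in\Harm_{6-2r}$, hence $W_{C,f}\equiv 0$ and consequently every $D_w$ is a $(6-2r)$-design. \textbf{Step 2} (lifting to degree $7-2r$): assuming $W_{C,f}\equiv 0$ for all $f\in\Harm_{6-2r}$, use an algebraic link between the extremal polynomials $F_{n,6-2r}$ and $F_{n,7-2r}$ — for example a raising operator sending $\Harm_{6-2r}$-invariants to $\Harm_{7-2r}$-invariants, or a shared Jacobi-type factor such as $(x^4-y^4)$ — to deduce that $c_g=0$ for every $g\in\Harm_{7-2r}$ as well, yielding $\delta(C)\geq 7-2r$.

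The chief obstacle is \textbf{Step 1}: the nonvanishing of the coefficients of $F_{n,6-2r}$ across the entire admissible window must be proved \emph{uniformly in} $m$, which reduces to a careful coefficient analysis of the extremal polynomial in the Gleason ring and to ruling out coincidental roots inside $[4\lfloor n/24\rfloor+4,\,n-4\lfloor n/24\rfloor-4]$. Step 2 is conceptually even more delicate, since it is what converts a nominal jump of one into Janusz's jump of two; pinning down the precise algebraic mechanism that links consecutive harmonic degrees in the extremal setting is, I expect, the real heart of the argument.
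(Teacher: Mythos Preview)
Your overall framework---Bachoc's harmonic weight enumerators, relative invariance under the MacWilliams group, extremality cutting the space down---is exactly the approach the paper points to (it does not prove Theorem~\ref{thm:Janusz} itself, but cites Janusz and remarks that Theorems~\ref{thm:strengthening of the Assmus-Mattson theorem} and~\ref{Thm:Bannai et al. thm6} together constitute an alternative proof). Your Step~1 is correct in spirit and is precisely the content of Theorem~\ref{Thm:Bannai et al. thm6}.

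Your Step~2, however, misidentifies the mechanism for the jump from $6-2r$ to $7-2r$. There is no lifting, no raising operator, and no link between $F_{n,6-2r}$ and $F_{n,7-2r}$ to be found---because $F_{n,7-2r}$ is identically zero. The point is that for $k=7-2r$ the dimension count in $I_{G,\chi_k}$ leaves exactly as many free parameters as extremality imposes linear conditions, and the system is nonsingular, so $W_{C,f}\equiv 0$ for every $f\in\Harm_{7-2r}$ \emph{unconditionally}. This is the Calderbank--Delsarte--Sloane strengthening of Assmus--Mattson (Theorem~\ref{thm:strengthening of the Assmus-Mattson theorem}), reproved by Bachoc via this very count: every $D_w$ is automatically a $\{1,\dots,5-2r\}\cup\{7-2r\}$-design. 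The only gap in the interval $[1,7-2r]$ is at $k=6-2r$, where the extremal space is one-dimensional. So the logic runs: if some $D_{w'}$ is a $(6-2r)$-design, Step~1 forces $c_f=0$ for all $f\in\Harm_{6-2r}$, hence every $D_w$ is a $(6-2r)$-design; combined with the already-unconditional $(7-2r)$, this yields $\delta(C)\ge 7-2r$. Your claim that ``at degrees $k\le 5-2r$ the relevant module turns out to be trivial'' is right but incomplete: the same triviality occurs at $k=7-2r$, and that is where the jump of two really comes from.

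On Step~1, your worry about uniformity in $m$ is well placed; this is what Bannai--Koike--Shinohara--Tagami verify, and the paper's own Lemmas~\ref{lem:poly. zero 1}--\ref{lem:poly. zero 2} carry out the analogous nonvanishing checks for higher $k$ by reducing the coefficient equations to explicit Diophantine conditions (in one case a Pell equation).
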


In this paper, we investigate an upper bound of $s(C)$. 
First, we collect some known results for the support $t$-design of the minimum weight. 
Let $D_{4m+4}^{24m}$ be the support $t$-designs of the minimum weight ($w=4m+4$) of 
an extremal binary doubly even self-dual $[24m,12m,4m+4]$ code. 
By the Assmus--Mattson theorem, $D_{4m+4}^{24m}$ is a $5$-$(24m,4m+4,\binom{5m-2}{m-1})$ design. 
Suppose that $D_{4m+4}^{24m}$ is a $t$-$(24m,4m+4,\lambda_{t})$ design with $t \geq 6$. 
Then $\lambda_{t}= \binom{5m-2}{m-1} \binom{4m-1}{t-5}/\binom{24m-5}{t-5}$ is a nonnegative integer. 
It is known that if $D_{4m+4}^{24m}$ is a $6$-design, then it is a $7$-design 
by a strengthening of the Assmus--Mattson theorem \cite{strengthening of the Assmus-Mattson theorem}.
In 2006, Bannai et al.~\cite{Bannai-Koike-Shinohara-Tagami} showed that $D_{4m+4}^{24m}$ is never a $9$-design.
In \cite{extremal design H-M-N}, we give the following theorem.

\begin{Thm}
[\cite{extremal design H-M-N,errataHMN}]
\label{thm:minimum weight bound H-K-N}
Let $D_{4m+4}^{24m}$, $D_{4m+4}^{24m+8}$ and $D_{4m+4}^{24m+16}$ be the support $t$-designs of the minimum weight of 
an extremal binary doubly even self-dual $[24m,12m,4m+4]$ code $($$m \leq 153$$)$,
$[24m+8,12m+4,4m+4]$ code $($$m \leq 158$$)$ and $[24m+16,12m+8,4m+4]$ code $($$m \leq 163$$)$, respectively.

\begin{enumerate}
\item[$(1)$] If $D_{4m+4}^{24m}$ becomes a $6$-design, then $D_{4m+4}^{24m}$ is a $7$-design and $m$ must be in the set 
$\{15$, $52$, $55$, $57$, 
$59$, $60$, $63$, $90$, $93$, $104$, 
$105$, $107$, $118$,
$125$, $127$, $135$, $143$, $151 \}$.
Finally, 
$D_{4m+4}^{24m}$ is never an $8$-design.

\item[$(2)$] 
If $D_{4m+4}^{24m+8}$ becomes a $4$-design, then $D_{4m+4}^{24m+8}$ is a $5$-design and $m$ must be in the set 
$\{15$, $35$, $45$, $58$, $75$, $85$, $90$, $95$, $113$, $115$, $120$, $125 \}$. 
If $D_{4m+4}^{24m+8}$ becomes a $6$-design, then $m$ must be $58$. 
If $D_{4m+4}^{24m+8}$ becomes a $7$-design, then $m$ must be $58$. 
Finally, $D_{4m+4}^{24m+8}$ is never an $8$-design.

\item[$(3)$] 
If $D_{4m+4}^{24m+16}$ becomes a $2$-design, then $D_{4m+4}^{24m+16}$ is a $3$-design and $m$ must be in the set 
$\{5$, $10$, $20$, $23$, $25$, $35$, $44$, $45$, $50$, $55$, 
$60$, $70$, $72$, $75$, $79$, 
$80$, $85$, $93$, $95$, $110$, $118$, $120$, 
$121$, $123$, $125$, $130$, $142$, $144$, 
$145$, $149$, $150$, $155$, $156$, $157$, 
$160$, $163$$\}$. 
If $D_{4m+4}^{24m+16}$ becomes a $4$-design, then $m$ must be in the set 
$\{10$, $23$, $79$, 
$93$, $118$, $120$, 
$123$, $125$, $142$$\}$. 
If $D_{4m+4}^{24m+16}$ becomes a $5$-design, then $m$ must be in the set 
$\{23$, $79$, 
$93$, $118$, $120$, 
$123$, $125$, $142$$\}$.
Finally, $D_{4m+4}^{24m+16}$ is never a $6$-design.

\end{enumerate}
\end{Thm}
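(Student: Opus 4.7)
The plan is to combine three ingredients: the Assmus--Mattson starting designs, the integrality of the block-intersection parameters $\lambda_t$ of a $t$-design, and a harmonic-weight-enumerator obstruction at the top value of $t$. The Assmus--Mattson theorem guarantees that $D_{4m+4}^{24m+8r}$ is a $(5-2r)$-design for $r=0,1,2$. Promoting it to a $t$-design with $t>5-2r$ forces $\lambda_t$, which can be written explicitly as a ratio of binomial coefficients in $m$ and $t$ (for instance $\lambda_t=\binom{5m-2}{m-1}\binom{4m-1}{t-5}/\binom{24m-5}{t-5}$ in the first case), to be a nonnegative integer. This is a $p$-adic valuation condition on $m$ which Kummer's theorem reduces to counting carries in base-$p$ additions. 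Running through $m$ up to Zhang's upper bounds ($153$, $158$, $163$) produces the finite candidate sets listed in each part.

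Next, I would invoke the strengthening of the Assmus--Mattson theorem cited in the excerpt: in this Type II setting, a support design that is a $2k$-design is automatically a $(2k+1)$-design. This explains the coincidence of candidate sets at the even-to-odd transitions $6 \to 7$ in (1), $4 \to 5$ in (2), and $2 \to 3$ in (3); promotion from even to odd is free, so both levels share the same list, and no further sieving is needed at those two steps.

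The genuinely non-routine step is ruling out the top level in each case: the $8$-design exclusions in (1) and (2), and the $6$-design exclusion in (3). Integrality alone is unlikely to suffice, so I would deploy the harmonic weight enumerator. For a nonzero harmonic polynomial $f \in \Harm_d$ on the $n$ coordinate indices, the enumerator $W_{C,f}(x,y)$ obeys a MacWilliams-type transformation law, and the hypothesis that $D_w$ carries a $d$-design for every nonzero weight $w$ forces $W_{C,f}$ to be divisible by an explicit power of $(xy)$. Combined with extremality of $C$, which pins down the low-degree terms of $W_{C,f}$, this yields a polynomial identity in $m$ whose only solutions lie in a set disjoint from the candidate $m$ surviving the integrality sieve, giving the exclusion. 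The argument would parallel the Bannai--Koike--Shinohara--Tagami template that rules out the $9$-design case, redone one degree lower; the principal effort -- and the main obstacle -- is controlling these polynomial identities uniformly over the residual $m$ values, since the relevant harmonic space grows in $m$ and one must check that \emph{some} harmonic obstruction survives for every candidate.
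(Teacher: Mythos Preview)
This theorem is not proved in the present paper; it is quoted from \cite{extremal design H-M-N,errataHMN} as an input, so there is no in-paper proof to compare against. Judged on its own terms, your outline is right for the list-generating steps but goes astray at the final exclusions.

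The integrality sieve on $\lambda_t$ over Zhang's finite range of $m$ is indeed how the candidate lists arise, and your use of the strengthened Assmus--Mattson theorem (Theorem~\ref{thm:strengthening of the Assmus-Mattson theorem}) for the promotions $6\to 7$ in (1), $4\to 5$ in (2), and $2\to 3$ in (3) is correct: the ambient $T$-design already contains the next odd index, so once the even one is added the interval $\{1,\dots,t+1\}$ is complete. (Note that the further levels in (2) and (3), e.g.\ $4\to 5$ for $n=24m+16$, are \emph{not} free promotions and require separate integrality checks; this is why the list shrinks from nine values of $m$ to eight there.)

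The gap is at the top level. Your assertion that ``integrality alone is unlikely to suffice'' is unwarranted: the expected argument is simply to verify that $\lambda_8$ (resp.\ $\lambda_6$) is non-integral for every one of the finitely many $m$ surviving the previous sieve. That is a finite computation, and no harmonic machinery is needed for the \emph{minimum}-weight design.

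Your description of the harmonic mechanism is also confused. The divisibility $W_{C,f}\in (xy)^k\,\C[x,y]$ for $f\in\Harm_k$ is automatic from Bachoc's Theorem~\ref{thm:invariant} and does not depend on any design hypothesis. The $t$-design condition on $D_{4m+4}$ is that the minimum-weight coefficient of $W_{C,f}$ vanishes; in the extremal case this coefficient equals $c(f)$, which depends on the particular code $C$, not only on $m$, so no ``polynomial identity in $m$'' drops out. In this paper the harmonic enumerator is used in the opposite direction---to propagate a design property from the minimum weight to other weights (Propositions~\ref{prop:length 24m}, \ref{prop:length 24m+8}, \ref{prop:length 24m+16})---not to obstruct the minimum-weight design itself.
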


We investigate the support designs of 
the non minimum weights and 
as a corollary, we have an upper bound of $s(C)$. 

This paper is organized as follows. In Section~\ref{sec:Har}, 
we give the definition and some properties 
of the harmonic weight enumerators, 
which are used to study the support designs for the non minimum weights. 
In particular, we remark that the harmonic weight enumerators of 
Type II codes relate some invariant rings of the finite subgroup of 
$GL(2,\C)$. 
Using this facts, in order to show our results, we extend the methods of Bachoc \cite{Bachoc} and Bannai et al.~\cite{Bannai-Koike-Shinohara-Tagami}. 
Our results is the following theorem. 

\begin{Thm}\label{thm:main upper bound}
Let $C$ be an extremal binary doubly even self-dual code of length $n$.

\begin{enumerate}
\item[$(1)$] If $n=24m$, then $\delta(C)=s(C)=5$ or $\delta(C)=s(C)=7$.
\item[$(2)$] If $n=24m+8$, then $\delta(C)=s(C)=3$ or $5 \leq \delta(C) \leq s(C) \leq 7$.
\item[$(3)$] If $n=24m+16$, then $\delta(C)=s(C)=1$ or $3 \leq \delta(C) \leq s(C) \leq 5$.
\end{enumerate}

\end{Thm}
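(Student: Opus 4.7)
The strategy is to argue by contradiction via harmonic weight enumerators, in combination with Theorem~\ref{thm:Janusz}. In each of the three parts, Janusz's lower alternative is already sharp: when $\delta(C) = 5-2r$, no nontrivial weight yields a $(1+\delta(C))$-design, so $s(C) \leq \delta(C)$, and hence $s(C)=\delta(C)$. The only remaining content is the upper bound on $s(C)$ in the higher alternative, namely $s(C) \leq 7$ in parts (1) and (2) and $s(C) \leq 5$ in part (3).

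Fix $r \in \{0,1,2\}$ and set $t_0 := 8$ if $r \in \{0,1\}$ and $t_0 := 6$ if $r = 2$. Suppose for contradiction that $s(C) \geq t_0$; then some admissible weight $w$ makes $D_w$ a $t_0$-design. By the harmonic weight enumerator criterion of Bachoc, the coefficient of $x^{n-w}y^{w}$ in
\[
Z_{C,f}(x,y) \;:=\; \sum_{c\in C} \widetilde{f}(c)\, x^{n-\wt(c)} y^{\wt(c)}
\]
must vanish for every $f \in \Harm_{t_0}$. I would first invoke the structure described in Section~\ref{sec:Har}: for a Type II code, $Z_{C,f}$ lies in a specific finite-dimensional module over the Gleason ring $\C[W_8,W_{24}]$, the module being determined by $t_0$ through the character of the action of a finite subgroup of $\GL(2,\C)$ generated by the MacWilliams-type transform. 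An explicit homogeneous basis of its degree-$n$ component can be written down, with dimension growing linearly in $n$.

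Next I would impose extremality of $C$: the vanishing of the coefficients of $x^{n-j}y^{j}$ in $Z_{C,f}$ for $0 < j < 4\lfloor n/24\rfloor + 4$ eliminates all but a bounded number of free parameters in the module element representing $Z_{C,f}$. The coefficient of $x^{n-w}y^{w}$ then becomes an explicit expression in $n$, $w$, and a handful of parameters (which depend on the chosen $f$), and the contradiction should be obtained by verifying, by direct computation, that for every admissible $w$ some $f \in \Harm_{t_0}$ makes this expression nonzero. The main obstacle is the explicit bookkeeping at this last stage: for each residue $r$ one must write out the module basis, perform the extremality reduction, and check nonvanishing of the resulting polynomial on the admissible band of weights. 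This extends to arbitrary $w$ the minimum-weight analyses of Bachoc and Bannai--Koike--Shinohara--Tagami, and the smaller threshold $t_0 = 6$ is forced in the case $r = 2$ because the corresponding harmonic module is strictly more constrained, so the incompatibility between extremality and the design-vanishing hypothesis already occurs at that degree.
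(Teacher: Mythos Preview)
Your overall architecture---harmonic weight enumerators, the relative-invariant module for $\Harm_{t_0}$, and extremality to cut down parameters---matches the paper exactly. But there is a genuine gap at the step you describe as ``verifying, by direct computation, that for every admissible $w$ some $f \in \Harm_{t_0}$ makes this expression nonzero.'' Extremality does more than leave a bounded number of parameters: it leaves exactly \emph{one}. For each $t_0$ and each residue $r$, the degree-$(n-2t_0)$ piece of the relevant $I_{G,\chi_{t_0}}$ is spanned by the monomials $P_8^a P_{24}^b$ (times the appropriate prefactor), and the vanishing below the minimum weight $4m+4$ kills all of them except the one with maximal $b$. Hence $W_{C,f}(x,y) = c(f)\cdot P(x,y)$ where $P$ is a completely explicit polynomial independent of $f$ and $C$, and $c(f)$ is a single scalar depending linearly on $f$ and on the unknown code $C$. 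The coefficient at weight $w$ is $c(f)\cdot[\text{coeff.\ of }P\text{ at }w]$, and you cannot force this to be nonzero by choosing $f$: if the code happens to satisfy $c(f)=0$ for all $f\in\Harm_{t_0}$, every coefficient vanishes regardless of $w$. No direct computation in $n$ and $w$ can exclude this, because that hypothesis is exactly ``every $D_w$ is a $t_0$-design,'' which is what you are trying to disprove.

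The paper closes this gap in two moves. First (Lemmas~\ref{lem:poly. zero 1} and~\ref{lem:poly. zero 2}), it checks that the explicit factor $P$ has nonzero coefficient at every admissible weight in the relevant range; this yields the ``simultaneity'' Propositions~\ref{prop:length 24m}, \ref{prop:length 24m+8}, \ref{prop:length 24m+16}: either all $D_w$ are $t_0$-designs or none are. Second, it feeds in Theorem~\ref{thm:minimum weight bound H-K-N}, the arithmetic result that the \emph{minimum-weight} design $D_{4m+4}$ is never a $t_0$-design (never an $8$-design for $r=0,1$, never a $6$-design for $r=2$). Simultaneity then propagates this to every $w$, giving $s(C)<t_0$. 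Your plan omits this second ingredient; you refer to the minimum-weight analyses only as something to be extended, not as an input you must invoke. Relatedly, the reason $t_0=6$ suffices for $r=2$ is not that the harmonic module is ``more constrained''---the module calculus is entirely parallel in all three residues---but that the companion arithmetic result at the minimum weight already excludes $6$-designs there.
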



Thus we conclude that $\delta(C) \leq s(C) \leq 7$ for any extremal Type II code $C$.

\section{Harmonic weight enumerators}\label{sec:Har}

\subsection{Harmonic weight enumerators}\label{subsec:hw}

In this section, we extend a method of 
the harmonic weight enumerators 
which were used by Bachoc \cite{Bachoc} and 
Bannai et al.~\cite{Bannai-Koike-Shinohara-Tagami}.
For the readers convenience we quote from \cite{Bachoc,Delsarte}
the definitions and properties of discrete harmonic functions 
(for more information the reader is referred to \cite{Bachoc,Delsarte}). 


Let $\Omega=\{1, 2,\ldots,n\}$ be a finite set (which will be the set of coordinates of the code) and 
let $X$ be the set of its subsets, while, for all $k= 0,1, \ldots, n$, $X_{k}$ is the set of its $k$-subsets.
We denote by $\R X$, $\R X_k$ the free real vector spaces spanned by respectively the elements of $X$, $X_{k}$. 
An element of $\R X_k$ is denoted by
$$f=\sum_{z\in X_k}f(z)z$$
and is identified with the real-valued function on $X_{k}$ given by 
$z \mapsto f(z)$. 

Such an element $f\in \R X_k$ can be extended to an element $\tilde{f}\in \R X$ by setting, for all $u \in X$,
$$\tilde{f}(u)=\sum_{z\in X_k, z\subset u}f(z).$$
If an element $g \in \R X$ is equal to some $\tilde{f}$, for $f \in \R X_{k}$, we say that $g$ has degree $k$. 
The differentiation $\gamma$ is the operator defined by linearity from 
$$\gamma(z) =\sum_{y\in X_{k-1},y\subset z}y$$
for all $z\in X_k$ and for all $k=0,1, \ldots n$, and $\Harm_{k}$ is the kernel of $\gamma$:
$$\Harm_k =\ker(\gamma|_{\R X_k}).$$

\begin{Thm}[\cite{Delsarte}]\label{thm:design}
A set $\mathcal{B} \subset X_{k}$ of blocks is a $t$-design 
if and only if $\sum_{b\in \mathcal{B}}\tilde{f}(b)=0$ 
for all $f\in \Harm_k$, $1\leq k\leq t$. 
\end{Thm}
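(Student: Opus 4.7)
The plan is to recast the hypothesis as an orthogonality condition in $\R X_k$, equipped with the inner product making $X_k$ an orthonormal basis, and then run an induction on $k$ using the adjoint structure of $\gamma$. Throughout I write $w$ for the common block size, so $\mathcal{B}\subset X_w$, and for each $k\ge 0$ let $N_k\in\R X_k$ denote the function $N_k(z)=|\{b\in\mathcal{B}:z\subset b\}|$. By definition, proving that $\mathcal{B}$ is a $t$-design is equivalent to showing that $N_k$ is a constant multiple of $\mathbf{1}_{X_k}:=\sum_{z\in X_k}z$ for every $k\le t$.

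First I would assemble a few elementary ingredients. The adjoint of $\gamma$ is $\gamma^{*}(y)=\sum_{z\supset y,\,z\in X_k}z$, which yields the orthogonal decomposition $\R X_k=\Harm_k\oplus\mathrm{image}(\gamma^{*})$ and the fact that $\gamma$ is injective on $\mathrm{image}(\gamma^{*})$. Since $\gamma^{*}(\mathbf{1}_{X_{k-1}})=k\mathbf{1}_{X_k}$, the vector $\mathbf{1}_{X_k}$ lies in $\mathrm{image}(\gamma^{*})$, so in particular $\mathbf{1}_{X_k}\perp\Harm_k$. A short double count over chains $y\subset z\subset b$ gives $\gamma(N_k)=(w-k+1)N_{k-1}$, and the same count with $N_k$ replaced by $\mathbf{1}_{X_k}$ gives $\gamma(\mathbf{1}_{X_k})=(n-k+1)\mathbf{1}_{X_{k-1}}$. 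Finally, swapping the order of summation in the definition of $\tilde f$ shows $\sum_{b\in\mathcal{B}}\tilde f(b)=\langle N_k,f\rangle$, so the hypothesis of the theorem is precisely that $N_k\perp\Harm_k$ for each $1\le k\le t$.

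With this setup, the ``only if'' direction is immediate: if $N_k=\lambda_k\mathbf{1}_{X_k}$ then $\langle N_k,f\rangle=\lambda_k\langle\mathbf{1}_{X_k},f\rangle=0$ for all $f\in\Harm_k$. For the ``if'' direction I induct on $k$, the base case $k=0$ being trivial since $N_0=|\mathcal{B}|$ is a scalar. For the inductive step, assume $N_{k-1}=\lambda_{k-1}\mathbf{1}_{X_{k-1}}$ and set $\lambda_k:=(w-k+1)\lambda_{k-1}/(n-k+1)$; then $\gamma(N_k)=\gamma(\lambda_k\mathbf{1}_{X_k})$, and since both $N_k$ and $\lambda_k\mathbf{1}_{X_k}$ lie in $\mathrm{image}(\gamma^{*})$ where $\gamma$ is injective, they must be equal.

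No step poses a real obstacle. The main conceptual content is the identification of the hypothesis with the orthogonality condition $N_k\perp\Harm_k$; once that translation is in place, the rigidity $\Harm_k\cap\mathrm{image}(\gamma^{*})=\{0\}$ together with the identity $\gamma(N_k)=(w-k+1)N_{k-1}$ drives the whole induction.
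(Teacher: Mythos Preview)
Your argument is correct. Note, however, that the paper does not supply its own proof of this statement: Theorem~\ref{thm:design} is quoted from Delsarte~\cite{Delsarte} and used as a black box, so there is no in-paper proof to compare against. What you have written is essentially the standard Delsarte argument: translate the condition $\sum_{b\in\mathcal{B}}\tilde f(b)=0$ into the orthogonality $N_k\perp\Harm_k$ via $\sum_{b}\tilde f(b)=\langle N_k,f\rangle$, use the orthogonal splitting $\R X_k=\Harm_k\oplus\operatorname{image}(\gamma^{*})$ to place $N_k$ in $\operatorname{image}(\gamma^{*})$, and then propagate constancy downward-to-upward using $\gamma(N_k)=(w-k+1)N_{k-1}$ together with the injectivity of $\gamma$ on $\operatorname{image}(\gamma^{*})$. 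Each of the auxiliary identities you invoke (the formula for $\gamma^{*}$, $\gamma^{*}(\mathbf{1}_{X_{k-1}})=k\,\mathbf{1}_{X_k}$, $\gamma(\mathbf{1}_{X_k})=(n-k+1)\mathbf{1}_{X_{k-1}}$, and the double count for $\gamma(N_k)$) is routine, and your disambiguation of the overloaded symbol $k$ in the statement (using $w$ for the block size) is appropriate.
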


In \cite{Bachoc}, the harmonic weight enumerator associated to a binary linear code $C$ was defined as follows: 
\begin{Def}
Let $C$ be a binary code of length $n$ and let $f\in\Harm_{k}$. 
The harmonic weight enumerator associated to $C$ and $f$ is

$$W_{C,f}(x,y)=\sum_{c\in C}\tilde{f}(c)x^{n-\wt(c)}y^{\wt(c)}.$$
\end{Def}

Let $G$ be the subgroup of $\GL(2,\C)$ generated by elements 

$$T_{1}= \frac{1}{\sqrt{2}} \begin{pmatrix} 1 & 1 \\ 1 & -1 \end{pmatrix}, \ 
T_{2}= \begin{pmatrix} 1 & 0 \\ 0 & i \end{pmatrix}.$$
We consider the group $G= \langle T_{1}, T_{2} \rangle$ together with the characters $\chi_{k}$ defined by 
$$\chi_{k}(T_{1})=(-1)^{k},\  \chi_{k}(T_{2})=i^{-k}.$$
We denote by $I_{G}= \C [x,y]^{G}$ the ring of polynomial invariants of $G$ and by $I_{G,\chi_{k}}$ 
the ring of relative invariants of $G$ with respect to the character $\chi_{k}$. 
Let $P_{8}=x^8+14x^4y^4+y^8$, $P_{12}=x^2y^2(x^4-y^4)^2$, 
$P_{18}=xy(x^8-y^8)(x^8-34x^4y^4+y^8)$, 
$P_{24}=x^4y^4(x^4-y^4)^4$, $P_{30}=P_{12}P_{18}$ and 
\begin{align*}
I_{G,\chi_{k}}=
\left\{
\begin{array}{ll}
\langle P_8,P_{24}\rangle &\mbox{ if }k \equiv 0\pmod{4}\\
P_{12}\langle P_8,P_{24}\rangle &\mbox{ if }k \equiv 2\pmod{4}\\
P_{18}\langle P_8,P_{24}\rangle &\mbox{ if }k \equiv 3\pmod{4}\\
P_{30}\langle P_8,P_{24}\rangle &\mbox{ if }k \equiv 1\pmod{4}
\end{array}. 
\right. 
\end{align*}
Then the structure of these invariant rings is described as follows: 
\begin{Thm}[\cite{Bachoc}]\label{thm:invariant}
Let $C$ be an extremal binary doubly even self-dual code of length $n$, and let $f \in \Harm_{k}$. 
Then we have $W_{C,f}(x,y) =(xy)^{k} Z_{C,f} (x,y)$. Moreover, the polynomial $Z_{C,f} (x,y)$ is degree of $n-2k$ 
and  is in $I_{G, \chi_{k}}$, the space of the relative invariants of $G$ with respect to the character $\chi_{k}$. 
\end{Thm}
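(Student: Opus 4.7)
The plan splits the statement into three verifications: (a) the polynomial $(xy)^{k}$ divides $W_{C,f}(x,y)$; (b) the quotient $Z_{C,f}$ is homogeneous of degree $n-2k$; (c) $Z_{C,f}\circ T_{j}=\chi_{k}(T_{j})\,Z_{C,f}$ for $j=1,2$, so that $Z_{C,f}\in I_{G,\chi_{k}}$. Item (b) is immediate from (a) once one notes that $W_{C,f}$ is homogeneous of degree $n$, so the real work is in (a) and (c).

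For (a), divisibility by $y^{k}$ is trivial: the coefficient of $y^{w}$ in $W_{C,f}$ is $\sum_{\wt(c)=w}\tilde{f}(c)$, and for $w<k$ each $\tilde{f}(c)$ is an empty sum because $\supp(c)$ has fewer than $k$ elements. For divisibility by $x^{k}$ I would first record the standard identity
$$\tilde{f}(\Omega\setminus u)=(-1)^{k}\tilde{f}(u) \qquad (f\in\Harm_{k},\ u\subset\Omega),$$
a direct consequence of $\gamma f=0$ (straightforward for $k=1$ and extended by induction, and recorded in \cite{Bachoc,Delsarte}). Because $C$ is self-dual it contains the all-ones word $\mathbf{1}$, and reindexing $W_{C,f}(y,x)$ via $c\mapsto c+\mathbf{1}$ (which swaps weights $w$ and $n-w$ and complements supports), together with the above identity, yields $W_{C,f}(x,y)=(-1)^{k}W_{C,f}(y,x)$. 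Combined with $y^{k}\mid W_{C,f}(x,y)$, this forces $x^{k}\mid W_{C,f}(x,y)$, hence $(xy)^{k}\mid W_{C,f}(x,y)$.

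For (c), the $T_{2}$ case exploits doubly-evenness: since each $\wt(c)\equiv 0\pmod{4}$, the substitution $y\mapsto iy$ multiplies each monomial by $i^{\wt(c)}=1$, so $W_{C,f}(x,iy)=W_{C,f}(x,y)$; as $(xy)^{k}$ picks up a factor $i^{k}$, the quotient satisfies $Z_{C,f}(x,iy)=i^{-k}Z_{C,f}(x,y)=\chi_{k}(T_{2})Z_{C,f}$. The $T_{1}$ case uses Bachoc's harmonic MacWilliams identity, which relates $W_{C^{\perp},f}$ to $W_{C,f}\circ T_{1}$ up to a scalar $(-1)^{k}/|C|$ (with the usual $2^{n/2}$ absorbed into the normalised $T_{1}$). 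Specialising to $C=C^{\perp}$ and dividing through by $(xy)^{k}$, using how $T_{1}$ transforms this factor, reduces the identity to $Z_{C,f}\circ T_{1}=(-1)^{k}Z_{C,f}=\chi_{k}(T_{1})\,Z_{C,f}$.

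The main obstacle is the book-keeping in the $T_{1}$ step: one must verify that the divisor $(xy)^{k}$, the factor $(T_{1}x\cdot T_{1}y)^{k}=\bigl((x^{2}-y^{2})/2\bigr)^{k}$ produced by the substitution, and the normalisation in Bachoc's identity conspire to leave exactly the scalar $(-1)^{k}$ and no stray polynomial factor, so that $Z_{C,f}\circ T_{1}$ really is a scalar multiple of $Z_{C,f}$ and not merely a rational function times it. The factorisation step and the $T_{2}$ invariance are, by contrast, essentially mechanical once the harmonic identity $\tilde{f}(\Omega\setminus u)=(-1)^{k}\tilde{f}(u)$ is in hand.
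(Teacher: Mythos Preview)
The paper does not supply a proof of this theorem: it is quoted from \cite{Bachoc} and used as a black box throughout, so there is no argument in the present paper against which to compare your proposal.

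That said, your outline is correct and is essentially Bachoc's own argument. The divisibility by $y^{k}$ via $\tilde f(c)=0$ for $\wt(c)<k$, the complement identity $\tilde f(\Omega\setminus u)=(-1)^{k}\tilde f(u)$ for $f\in\Harm_{k}$, the use of $\mathbf{1}\in C$ to deduce $W_{C,f}(x,y)=(-1)^{k}W_{C,f}(y,x)$ and hence $x^{k}\mid W_{C,f}$, and the $T_{2}$-relative invariance from doubly-evenness are exactly the steps in \cite{Bachoc}. For the $T_{1}$ step, the bookkeeping you flag is carried out there as well: Bachoc's harmonic MacWilliams identity is in fact stated directly for the quotient $Z_{C,f}$ (the $(xy)^{k}$ factor on the left matches the $\bigl((x^{2}-y^{2})/2\bigr)^{k}$ produced on the right, and these cancel), so that for $C=C^{\perp}$ with $|C|=2^{n/2}$ one obtains $Z_{C,f}\circ T_{1}=(-1)^{k}Z_{C,f}$ with no residual polynomial factor. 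Your worry about a ``stray rational factor'' is therefore resolved by formulating the MacWilliams relation at the level of $Z_{C,f}$ from the outset rather than at the level of $W_{C,f}$. Finally, note that extremality plays no role in this statement (only doubly-even self-duality is used); the paper includes the hypothesis only because that is the standing assumption.
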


We recall the slightly more general definition of the notion of a $T$-design, for a subset $T$ of $\{ 1,2, \ldots, n \}$: 
a set $\mathcal{B}$ of blocks is called a $T$-design if and only if $\sum_{b\in \mathcal{B}}\tilde{f}(b)=0$ 
for all $f\in \Harm_k$ and for all $k \in T$. 
By Theorem \ref{thm:design}, a $t$-design is 
a $T= \{1, \ldots, t \}$-design.
Let $W_{C,f}=\sum_{i=0}^{n}c_f(i)x^{n-i}y^i$. 
Then we note that $D_w$ is a $T$-design if and only if $c_f(w)=0$ 
for all $f\in \mbox{Harm}_j$ with $j\in T$. 
The following theorem is called a strengthening of the Assmus--Mattson theorem.

\begin{Thm}[
\cite{{strengthening of the Assmus-Mattson theorem}}]\label{thm:strengthening of the Assmus-Mattson theorem}

Let $D_{w}$ be the support design of an extremal binary doubly even self-dual code of length $n$. 
\begin{itemize}
\item If $n \equiv 0 \pmod{24}$, $D_{w}$ is a $\{1, 2, 3, 4, 5, 7\}$-design.
\item If $n \equiv 8 \pmod{24}$, $D_{w}$ is a $\{1, 2, 3, 5\}$-design. 
\item If $n \equiv 16 \pmod{24}$,  $D_{w}$ is a $\{1, 3\}$-design.
\end{itemize}

\end{Thm}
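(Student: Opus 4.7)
The plan is to show that for each residue class of $n\bmod 24$ the harmonic weight enumerator $W_{C,f}$ vanishes identically whenever $f\in\Harm_k$ and $k$ is the ``extra'' value appearing in the claimed $T$, namely $k=7$ when $n\equiv 0$, $k=5$ when $n\equiv 8$, and $k=3$ when $n\equiv 16\pmod{24}$. Once $W_{C,f}\equiv 0$ for all $f\in\Harm_k$, the remark recorded just before the theorem gives $c_f(w)=0$ for every nontrivial weight $w$, so $D_w$ is a $\{k\}$-design. Combining this with the classical Assmus--Mattson theorem, which already supplies the $\{1,\ldots,5\}$-, $\{1,2,3\}$-, or $\{1\}$-design part respectively, produces the $T$-designs as stated.

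To achieve the vanishing I would fix $f\in\Harm_k$ and use Theorem \ref{thm:invariant} to write $W_{C,f}(x,y)=(xy)^kZ_{C,f}(x,y)$ with $Z_{C,f}\in I_{G,\chi_k}$ homogeneous of degree $n-2k$. The explicit description of $I_{G,\chi_k}$ via $P_8,P_{12},P_{18},P_{24},P_{30}$ exhibits the relevant graded piece as the span of the monomials $P_{d_k}\cdot P_8^aP_{24}^b$, where $d_k\in\{0,12,18,30\}$ is determined by $k\bmod 4$ and $(a,b)$ ranges over nonnegative integer solutions of $8a+24b=n-2k-\deg P_{d_k}$. A direct count shows that this graded piece has dimension $m-1$, $m-1$, and $m$ respectively in the three cases of interest (with the degenerate small-$m$ values producing the zero space, where the claim is immediate). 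On the extremality side, the minimum distance $d=4\lfloor n/24\rfloor+4$ forces $c_f(i)=0$ at every positive multiple of $4$ below $d$, and the $(xy)^k$ factor makes the conditions with $i<k$ vacuous; tallying what remains leaves exactly $m-1$, $m-1$, or $m$ nontrivial linear conditions on $Z_{C,f}$, matching the ambient dimension in each case.

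The remaining and main step is to verify that these vanishing conditions are linearly independent on $I_{G,\chi_k}$ restricted to the degree $n-2k$ component. My plan is to order the monomial basis $P_{d_k}P_8^aP_{24}^b$ by the exponent $b$ of $P_{24}$ and observe that, because $P_{24}=x^4y^4(x^4-y^4)^4$ has $x^{20}y^4$ as its lowest $y$-power, each increment of $b$ raises the minimal $y$-degree of $(xy)^kP_{d_k}P_8^aP_{24}^b$ by exactly $4$. Consequently the vanishing condition at the smallest admissible weight involves only the $b=0$ basis vector, the next condition involves $b\in\{0,1\}$, and so on; the resulting square matrix is triangular with diagonal entries given by the leading-in-$y$ coefficients of $(xy)^kP_{d_k}P_8^aP_{24}^b$. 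The hardest part of the argument is precisely the bookkeeping to carry this triangularity uniformly across the three choices of $P_{d_k}$, i.e.\ to identify the lowest $y$-powers of $P_{12}$, $P_{18}$ and $P_{30}$ and to check that the resulting diagonal coefficients are nonzero; once this is done, the matrix is nonsingular, forcing $Z_{C,f}=0$ and hence $W_{C,f}\equiv 0$. This is the same mechanism used in the extremal weight enumerator analyses of \cite{Bachoc} and \cite{Bannai-Koike-Shinohara-Tagami}, and can be adapted directly.
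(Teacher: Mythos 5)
Your argument is correct, but note that the paper does not prove this statement at all: it is quoted as a known result of Calderbank--Delsarte--Sloane \cite{strengthening of the Assmus-Mattson theorem}, and the authors only remark that Bachoc gave an alternative proof in \cite[Theorem~4.2]{Bachoc}. What you have written is essentially a reconstruction of that alternative proof of Bachoc, and it is the same mechanism the paper itself deploys in Section~\ref{sec:Har} for the adjacent values of $k$: write $W_{C,f}=(xy)^kZ_{C,f}$ with $Z_{C,f}\in I_{G,\chi_k}$ by Theorem~\ref{thm:invariant}, expand in the basis $P_{d_k}P_8^aP_{24}^b$, and let extremality kill coefficients. Your dimension counts check out (for $n=24m$, $k=7$ one needs $a+3b=3m-4$, giving $m-1$ basis elements against the $m-1$ nonvacuous vanishing conditions at $i=8,12,\dots,4m$; similarly $m-1$ for $n=24m+8$, $k=5$ and $m$ for $n=24m+16$, $k=3$), and the triangularity is clean because $P_8$, $P_{18}$, $P_{24}$, $P_{30}$ all have lowest-$y$ coefficient $1$ with $y$-valuations $0,1,4,3$ respectively, so every diagonal entry of your matrix equals $1$. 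The only point worth flagging is that this route differs from the original proof in \cite{strengthening of the Assmus-Mattson theorem}, which does not use harmonic weight enumerators; what the invariant-theoretic route buys is exactly the uniformity you exploit, namely that the same computation with $Z_{C,f}$ not forced to vanish yields the explicit formulas (\ref{eqn:W_ 4l})--(\ref{eqn:W_ 4l,8}) on which the rest of the paper rests.
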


We remark that Bachoc gave an alternative proof of a strengthening of the Assmus--Mattson theorem 
in \cite[Theorem~4.2]{Bachoc}. 
Bannai--Koike--Shinohara--Tagami \cite[Theorem 6 and Remark 5]{Bannai-Koike-Shinohara-Tagami} 
proved the following theorem which is essentially an alternative proof of Theorem~\ref{thm:Janusz}.

\begin{Thm}[\cite{Bannai-Koike-Shinohara-Tagami}]\label{Thm:Bannai et al. thm6}
Let $D_{w}$ be the support design of an extremal binary doubly even self-dual code of length $n$. 
\begin{enumerate}
\item[$(1)$] If $n \equiv 0 \pmod{24}$, $D_{w}$ of all weights are $6$-designs or not simultaneously.
\item[$(2)$] If $n \equiv 8 \pmod{24}$, $D_{w}$ of all weights are $4$-designs or not simultaneously.
\item[$(3)$] If $n \equiv 16 \pmod{24}$, $D_{w}$ of all weights are $2$-designs or not simultaneously.
\end{enumerate}

\end{Thm}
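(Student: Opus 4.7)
The plan is to reduce, in each of the three cases, to a single ``critical'' harmonic degree $k$---namely $k=6$ when $n\equiv 0$, $k=4$ when $n\equiv 8$, and $k=2$ when $n\equiv 16 \pmod{24}$---because Theorem~\ref{thm:strengthening of the Assmus-Mattson theorem} already supplies the design property for every smaller $k$ (and for $k=7$ in case~(1)). By Theorem~\ref{thm:design} this is equivalent to showing that if $c_f(w_0)=0$ holds for every $f\in\Harm_k$ at one nontrivial weight $w_0$, then the same vanishing holds for every $f\in\Harm_k$ at every nontrivial weight $w$.

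The next step is to apply Theorem~\ref{thm:invariant} to write $W_{C,f}(x,y)=(xy)^k Z_{C,f}(x,y)$ with $Z_{C,f}$ in the appropriate piece of $I_{G,\chi_k}$; in each case this is either $\langle P_8,P_{24}\rangle$ or $P_{12}\langle P_8,P_{24}\rangle$, whose homogeneous component of degree $n-2k$ has a basis consisting of the monomials $P_8^a P_{24}^b$ (multiplied by $P_{12}$ when applicable). Then I would exploit extremality of $C$: no nonzero codeword of weight less than $4m+4$ exists, so $c_f(w)=0$ for $0\leq w<4m+4$, and a MacWilliams-type symmetry transfers this vanishing to the top end $w>n-(4m+4)$. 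Translating these vanishings into linear conditions on the coordinates of $Z_{C,f}$ in the chosen basis should cut the admissible subspace of $I_{G,\chi_k}$ down to dimension exactly one; the count to verify is roughly ``dimension of the basis minus number of independent vanishing conditions'', which matches up as $(m+1)-m$ or $m-(m-1)$ depending on the case.

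Granted a one-dimensional admissible space spanned by some $Z^*$, we obtain $Z_{C,f}=\beta(f)Z^*$ for a linear functional $\beta:\Harm_k\to\C$, hence $c_f(w)=\beta(f)c^*(w)$ with $c^*(w)$ the coefficients of $(xy)^k Z^*$. Vanishing of $c_f(w_0)$ for all $f$ then forces either $\beta\equiv 0$, in which case $W_{C,f}\equiv 0$ for every $f\in\Harm_k$ and every $D_w$ is a design of the desired strength, or else $c^*(w_0)=0$. For the ``all or nothing'' dichotomy to follow in the latter case, the coefficients $c^*(w)$ must be either uniformly zero or uniformly nonzero on the nontrivial range $[4m+4,\,n-(4m+4)]$; since $Z^*\neq 0$ rules out the first, the whole theorem reduces to showing $c^*(w)\neq 0$ for every nontrivial weight $w$. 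This nonvanishing is where I expect the real work to lie: the dimension count of the previous paragraph is a straightforward consequence of the structure of $\langle P_8,P_{24}\rangle$, but ruling out an accidental zero among the $O(m)$ coefficients of the canonical polynomial $Z^*$ appears to require either an explicit closed form for $Z^*$ (in the spirit of Gleason's formula for the ordinary weight enumerator of an extremal code) or a positivity/recursion argument tailored to each residue class of $n$ modulo $24$.
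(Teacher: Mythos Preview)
Your plan is correct and is exactly the method the paper adopts. Note that the paper does not supply its own proof of this theorem (it is quoted from \cite{Bannai-Koike-Shinohara-Tagami}); but the paper's proofs of its own Propositions~\ref{prop:length 24m}, \ref{prop:length 24m+8}, \ref{prop:length 24m+16} for larger $t$ follow precisely the template you describe, and the paper says explicitly that it is extending that method. The reduction via Theorem~\ref{thm:invariant} and the triangular dimension count you sketch are carried out verbatim in Section~2.2 for neighbouring values of $t$.

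The one place you misjudge the argument is the final nonvanishing step, which you flag as ``where the real work lies.'' For the three critical degrees here ($k=6,4,2$) the surviving one-dimensional space contains no stray $P_{8}$ or $P_{18}$ factor: extremality collapses $Z_{C,f}$ to $c(f)P_{12}P_{24}^{m-1}$, $c(f)P_{24}^{m}$, $c(f)P_{12}P_{24}^{m}$ in the three cases, giving
\[
W_{C,f}(x,y)=c(f)\,x^{4m+4}y^{4m+4}(x^{4}-y^{4})^{N},\qquad N=4m-2,\ 4m,\ 4m+2,
\]
respectively. The coefficients of $(x^{4}-y^{4})^{N}$ are signed binomial coefficients $(-1)^{j}\binom{N}{j}$, all nonzero, so the ``accidental zero'' you worry about is impossible and no recursion or positivity argument is needed. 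It is only for the higher $t$ treated in this paper---where an extra factor $P_{8}^{\beta}$ or $(x^{4}+y^{4})(x^{8}-34x^{4}y^{4}+y^{8})$ appears, as in equations~(\ref{eqn:W_ 4l})--(\ref{eqn:W_ 4l,8})---that nonvanishing becomes genuinely delicate and requires the Pell-equation analysis of Lemmas~\ref{lem:poly. zero 1} and~\ref{lem:poly. zero 2}.
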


\subsection{Harmonic weight enumerators of extremal Type II codes}

In this section, 
we give the explicit description of 
the harmonic weight enumerators of 
extremal Type II codes of for the particular cases, which will be needed 
in the proof of the our theorems in Section \ref{sec: proof of thm}. 
We set $n=24m+8r$ the length of a code $C$. 
\paragraph{\underline{Case $t=4$ and $r=2$.}}
Let us assume that $t=4$, and $C$ is an extremal binary doubly even self-dual code of 
length $n=24m+16$. 
Then by the Theorem~\ref{thm:invariant} we have $W_{C,f}(x,y) =c(f) (xy)^{4} Z_{C,f} (x,y)$, 
where $c(f)$ is a linear function from Harm$_{t}$ to $\R$ and $Z_{C,f} (x,y) \in I_{G,\chi_{0}}$.
By Theorem \ref{thm:invariant}, 
$Z_{C,f} (x,y)$ is written in the following form:
\[
Z_{C,f}(x,y) = \sum_{i=0}^{m}a_{i}P_{8}^{3(m-i)+1} P_{24}^{i}.
\]
Since the minimum weight of $C$ is $4m+4$, 
we have $a_{i}=0$ for $i \neq m$. 
Therefore, $W_{C,f}(x,y)$ is written in the following form: 
\begin{align}
W_{C,f}(x,y) &=c(f) (xy)^{4} P_{8} P_{24}^{m}  \notag \\
& =c(f) x^{4m+4}y^{4m+4} (x^{4}-y^{4})^{4m} 
(x^{8}+14x^{4}y^{4}+y^{8}). \label{eqn:W_ 4l}
\end{align}

The other cases are as follows. 

\paragraph{\underline{Case $t=5$ and $r=2$.}}

$W_{C,f}(x,y)$ is written in the following form: 
\begin{align*}
W_{C,f}(x,y) &=c(f) (xy)^{5} Z_{C,f} (x,y) \notag \\
             &=c(f) (xy)^{5} P_{30}  \sum_{i=0}^{m-1}a_{i}P_{8}^{3(m-i)-3} P_{24}^{i}. 
\end{align*}
If $C$ is extremal, then 
\begin{align}
W_{C,f}(x,y) &=c(f) (xy)^{5} P_{30}  P_{24}^{m-1} \notag \\
             &=c(f) x^{4m+4}y^{4m+4} (x^{4}-y^{4})^{4m-1}(x^{4}+y^{4}) \notag \\
& \hspace{15pt} (x^{8}-34x^{4}y^{4}+y^{8}). \label{eqn:W_ 4l+1}
\end{align}

\paragraph{\underline{Case $t=6$ and $r=1,2$.}} 

$W_{C,f}(x,y)$ is written in the following form: 
\begin{align*}
W_{C,f}(x,y) &=c(f) (xy)^{6} Z_{C,f} (x,y) \notag \\
             &=c(f) (xy)^{6} P_{12}  \sum_{i=0}^{m-1}a_{i}P_{8}^{3(m-i)-3+r} P_{24}^{i}. 
\end{align*}
If $C$ is extremal, then 
\begin{align}
W_{C,f}(x,y) &=c(f) (xy)^{6} P_{12}  P_{8}^{r} P_{24}^{m-1} \notag \\
             &=c(f) x^{4m+4}y^{4m+4} (x^{4}-y^{4})^{4m-2} (x^{8}+14x^{4}y^{4}+y^{8})^{r}. \label{eqn:W_ 4l+2}
\end{align}

\paragraph{\underline{Case $t=7$ and $r=1$.}} 

$W_{C,f}(x,y)$ is written in the following form: 
\begin{align*}
W_{C,f}(x,y) &=c(f) (xy)^{7} Z_{C,f} (x,y) \notag \\
             &=c(f) (xy)^{7} P_{18}  \sum_{i=0}^{m-1}a_{i}P_{8}^{3(m-i)-3} P_{24}^{i}. 
\end{align*}
If $C$ is extremal, then 
\begin{align}
W_{C,f}(x,y) &=c(f) (xy)^{7} P_{18} P_{24}^{m-1}\notag\\
&=c(f) x^{4m+4}y^{4m+4} (x^{4}-y^{4})^{4m-3}(x^{4}+y^{4})\notag\\
&\hspace{15pt}(x^{8}-34x^{4}y^{4}+y^{8}).\label{eqn:W_ 4l+3}
\end{align}

\paragraph{\underline{Case $t=8$ and $r=0,1$.}}

$W_{C,f}(x,y)$ is written in the following form: 
\begin{align*}
W_{C,f}(x,y) &=c(f) (xy)^{8} Z_{C,f} (x,y) \notag \\
&=c(f) (xy)^{8}  \sum_{i=0}^{m-1}a_{i}P_{8}^{3(m-i)-2+r} P_{24}^{i}. 
\end{align*}
If $C$ is extremal, then 
\begin{align}
W_{C,f}(x,y) &=c(f) (xy)^{8} P_{8}^{r+1} P_{24}^{m-1}  \notag \\
& =c(f) x^{4m+4}y^{4m+4} (x^{4}-y^{4})^{4m-4} 
(x^{8}+14x^{4}y^{4}+y^{8})^{r+1}. \label{eqn:W_ 4l,8}
\end{align}


\subsection{Coefficients of the harmonic weight enumerators of extremal Type II codes}

As we mentioned in Section \ref{subsec:hw}, 
it is important for the support designs of a code $C$ whether the coefficients of $W_{C,f}(x,y)$ are zero or not.
Therefore, we investigate it and show the the following lemmas.

\begin{Lem}\label{lem:poly. zero 1}
Let $Q=(x^{4}-y^{4})^{\alpha} (x^{8}+14x^{4}y^{4}+y^{8})^{\beta}$  
with $0\leq \alpha \leq 652$ and $\beta= 1 ,2$.

\begin{enumerate}
\item[$(1)$] In the case $\beta =1$, 
if the coefficients of 
$(x^{4})^{\alpha+2-i}(-y^{4})^{i}$ in $Q$ are equal to $0$ 
for $0 \leq i \leq \frac{\alpha+2}{2}$, 
then $(\alpha, i)=(14,1)$, $(223,15)$.

\item[$(2)$]  In the case $\beta =2$, 
the coefficients of 
$(x^{4})^{\alpha+4-i}(-y^{4})^{i}$ in $Q$ are equal to $0$ 
for $0 \leq i \leq \frac{\alpha+4}{2}$, 
then $(\alpha, i)=(28,1)$. 

\end{enumerate}

\end{Lem}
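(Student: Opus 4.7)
The plan is to clear the fourth powers by substituting $u=x^4$, $v=y^4$, turning $Q$ into the bivariate polynomial $(u-v)^\alpha(u^2+14uv+v^2)^\beta$ of total degree $\alpha+2\beta$. Combining the binomial expansion $(u-v)^\alpha=\sum_k(-1)^k\binom{\alpha}{k}u^{\alpha-k}v^k$ with the explicit square $(u^2+14uv+v^2)^2=u^4+28u^3v+198u^2v^2+28uv^3+v^4$, one reads off that the coefficient of $u^{\alpha+2\beta-i}(-v)^i$ in $Q$ equals
\[
\binom{\alpha}{i}-14\binom{\alpha}{i-1}+\binom{\alpha}{i-2}\qquad(\beta=1),
\]
and the analogous alternating five-term sum with coefficients $1,-28,198,-28,1$ for $\beta=2$ (using the convention $\binom{\alpha}{j}=0$ for $j\notin[0,\alpha]$). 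In both cases the lemma is equivalent to determining the zero set of an explicit binomial sum.

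For part (1) I would dispatch $i=0,1$ by inspection: $i=0$ gives $1\neq 0$, while $i=1$ forces $\alpha=14$. For $2\le i\le \alpha$, set $a=i$, $b=\alpha+2-i$ and $s=a+b=\alpha+2$; dividing the vanishing condition by $\binom{\alpha}{i-1}$ and using the standard ratios $\binom{\alpha}{i}/\binom{\alpha}{i-1}=(b-1)/a$ and $\binom{\alpha}{i-2}/\binom{\alpha}{i-1}=(a-1)/b$, the equation becomes
\[
a^2-14ab+b^2=a+b,\qquad\text{equivalently}\qquad (a-b)^2=\frac{s(3s+1)}{4}.
\]
The coprimality $\gcd(s,3s+1)=1$ then forces either $s\equiv 0\pmod{4}$ with $s=4m^2$ and $n^2-12m^2=1$, or $s\equiv 1\pmod{4}$ with $s=k^2$ and $(2m)^2-3k^2=1$. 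Enumerating the Pell solutions in the range $s=\alpha+2\le 654$ leaves only $s=16$ (giving $(\alpha,i)=(14,1)$) and $s=225$ (giving $(\alpha,i)=(223,15)$).

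For part (2), the factor $1-14w+w^2$ enters with multiplicity two, so I do not expect a comparably tidy reduction to a single Pell equation. The planned finish is therefore direct: enumerate $\alpha\in[0,652]$ and $i\in[1,(\alpha+4)/2]$, and test each five-term binomial sum against zero. The boundary case $i=1$ immediately yields $\alpha=28$, and the remaining finite check confirms that this is the only solution in the stated range.

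The main obstacle is part (2): without a clean algebraic reduction, the proof rests on a finite but non-trivial (computer-assisted) enumeration. A cleaner argument ruling out spurious zeros of the five-term alternating sum for $i\ge 2$ would be preferable, but the straightforward route is computational.
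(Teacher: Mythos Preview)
Your argument is correct and follows essentially the same route as the paper. For part~(1) the paper also reduces the vanishing of the three-term binomial combination to the Pell equation $X^2-3Y^2=1$ (via $3\alpha+7=X^2$, $\alpha+2=Y^2$), obtaining exactly $\alpha\in\{14,223\}$ in range; your symmetric parameterisation $a=i$, $b=\alpha+2-i$ and the splitting into $n^2-12m^2=1$ and $(2m)^2-3k^2=1$ is just the same Pell equation sorted by the parity of its solutions. For part~(2) the paper likewise handles $i\le 3$ by hand and then finishes the five-term recurrence for $2\le j\le\alpha-2$, $5\le\alpha\le 652$ by computer search, so your plan matches the published proof.
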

\begin{proof}
We recall that 
$C$ does not exist if $n=24m$ ($m \geq 154)$, 
$24m+8$ ($m \geq 159$), $24m+16$ ($m \geq 164$).
Then $0\leq \alpha \leq 652$ satisfy the condition 
for $m \leq 163$ in equations (\ref{eqn:W_ 4l})--(\ref{eqn:W_ 4l,8}). 

$(1)$ In the case $\beta =1$, 
\begin{align*}
Q = & (x^{4}-y^{4})^{\alpha}(x^{8}+14x^{4}y^{4}+y^{8}) \\
  = & \sum_{j=0}^{\alpha} \binom{\alpha}{j} (-1)^{j}(x^{4})^{\alpha-j}(y^{4})^{j}(x^{8}+14x^{4}y^{4}+y^{8}).
\end{align*}

For $0\leq \alpha \leq 2$, it is easily seen by a direct computation that 
the all coefficients of $(x^{4})^{\alpha+2-i}(-y^{4})^{i}$ in $Q$ are not equal to $0$. 
Therefore, let $\alpha \geq 3$.
It is clear that the coefficient of 
$(x^{4})^{\alpha+2}$ in $Q$ (similarly $(y^{4})^{\alpha+2}$) is equals to $1$.
Next, the coefficient of 
$(x^{4})^{\alpha+1}(y^{4})$ in $Q$ (similarly $(-1)^{\alpha}(x^{4})(y^{4})^{\alpha+1}$) is equals to $14- \binom{\alpha}{1}$. 
Hence if $\alpha=14$, this coefficient is equals to $0$.  

The coefficients of 
$(x^{4})^{\alpha-(j-1)}(y^{4})^{j+1}$ in $Q$ for $1 \leq j \leq \alpha-1$ are the following formula: 

\begin{align*}
&\binom{\alpha}{j-1} (-1)^{j-1}(x^{4})^{\alpha-(j-1)}(y^{4})^{j-1} \times y^{8} \\
&+\binom{\alpha}{j} (-1)^{j}(x^{4})^{\alpha-j}(y^{4})^{j} \times 14x^{4}y^{4}  \\
&+\binom{\alpha}{j+1} (-1)^{j+1}(x^{4})^{\alpha-(j+1)}(y^{4})^{j+1} \times x^{8} \\
&= \left( \frac{j}{\alpha-j+1} -14 + \frac{\alpha-j}{j+1} \right)
\binom{\alpha}{j} (-1)^{j-1}(x^{4})^{\alpha-(j-1)}(y^{4})^{j+1}.
\end{align*}
If 
\begin{align*}
\left( \frac{j}{\alpha-j+1} -14 + \frac{\alpha-j}{j+1} \right)=\frac{16j^{2}-16 \alpha j + \alpha^{2}-13 \alpha -14}{(\alpha-j+1)(j+1)}=0, 
\end{align*}
we have $16j^{2}-16 \alpha j + \alpha^{2}-13 \alpha -14=0$ and 
\begin{align*}
j= \frac{2 \alpha \pm \sqrt{(3\alpha+7)(\alpha+2)}}{4}.
\end{align*}





Since $j$ is a nonnegative integer, $(3\alpha+7)(\alpha+2)$ is a square number. 
Let $\ell$ be the greatest common divisor of $3\alpha+7$ and $\alpha+2$. 
We set $3\alpha+7= \ell z_{1}$ and $\alpha+2 =\ell z_{2}$, 
where $z_{1}$, $z_{2}$ are nonnegative integers. 
Then we have $\ell(3z_{2}-z_{1})=-1$. Hence we have $\ell=1$. 
Therefore, both $3\alpha+7$ and $\alpha+2$ are square numbers. 

Let 
\begin{align}
\begin{cases}
3\alpha+7 = X^{2}  \\
\alpha+2  =Y^{2}  \label{eqn: alpha+2}
\end{cases},
\end{align}
where $X$ and $Y$ are nonnegative integers. 
Then we have 
\begin{equation}
X^{2}-3Y^{2}=1.  \label{eqn:pell eq}
\end{equation}
This equation is the instance of Pell equation $X^{2}-nY^{2}=1$ for $n=3$. 
The solving of the Pell equation is well known \cite[page 276]{IR}.
The equation (\ref{eqn:pell eq}) has the nontrivial smallest integer solution $(X_{1},Y_{1})=(2,1)$. 
Then all remaining solutions may be calculated as 
$X_{k}+Y_{k} \sqrt{3}=(X_{1}+Y_{1} \sqrt{3})^{k}=(2+\sqrt{3})^{k}$. 
Equivalently, we may calculate subsequent solutions via the recurrence relation
\begin{align*}
\begin{cases}
X_{k+1} =X_{1}X_{k}+3Y_{1}Y_{k} =2X_{k}+3Y_{k} \\
Y_{k+1} =Y_{1}X_{k}+ X_{1}Y_{k} = X_{k}+2Y_{k} 
\end{cases}.
\end{align*}


The above recurrence formulas generates the infinite sequence of solutions 
$$(2,1), (7,4), (26,15), (97,56), (362,209), \ldots.$$ 
By (\ref{eqn: alpha+2}), we obtain
$$\alpha= -1,14,223,3134,43679, \ldots.$$
Therfore the equation $\left( \frac{j}{\alpha-j+1} -14 + \frac{\alpha-j}{j+1} \right)=0$ 
has a solution $(\alpha, j)=(223,14)$ for $3 \leq \alpha \leq 652$ and $1\leq j\leq \alpha -1$. 

This completes the proof of Lemma \ref{lem:poly. zero 1} (1).

$(2)$ In the case $\beta =2$, 
\begin{align*}
Q = & (x^{4}-y^{4})^{\alpha}(x^{8}+14x^{4}y^{4}+y^{8})^{2} \\
  = & \sum_{j=0}^{\alpha} \binom{\alpha}{j} (-1)^{j}(x^{4})^{\alpha-j}(y^{4})^{j}
(x^{16}+28x^{12}y^{4}+198x^{8}y^{8}+28x^{4}y^{12}+y^{16}).
\end{align*}

For $0\leq \alpha \leq 4$, it is easily seen by a direct computation that 
the all coefficients of $(x^{4})^{\alpha+4-i}(-y^{4})^{i}$ in $Q$ are not equal to $0$. 
Therefore, let $\alpha \geq 5$.
It is clear that the coefficient of 
$(x^{4})^{\alpha+4}$ in $Q$ (similarly $(y^{4})^{\alpha+4}$) is equals to $1$.
The coefficient of 
$(x^{4})^{\alpha+3}(y^{4})$ in $Q$ (similarly $(-1)^{\alpha}(x^{4})(y^{4})^{\alpha+3}$) is equals to $28- \binom{\alpha}{1}$. 
Hence if $\alpha=28$, this coefficient is equals to $0$.  
The coefficient of 
$(x^{4})^{\alpha+2}(y^{4})^{2}$ in $Q$ (similarly $(-1)^{\alpha}(x^{4})^{2}(y^{4})^{\alpha+2}$) is equals to 
$198- 28 \binom{\alpha}{1}+ \binom{\alpha}{2}$. 
The equation $198- 28 \binom{\alpha}{1}+ \binom{\alpha}{2}=0$ has no integer solution. 
The coefficient of 
$(x^{4})^{\alpha+1}(y^{4})^{3}$ in $Q$ (similarly $(-1)^{\alpha}(x^{4})^{3}(y^{4})^{\alpha+1}$) is equals to 
$28- 198 \binom{\alpha}{1}+ 28 \binom{\alpha}{2}- \binom{\alpha}{3}$. 
The equation $28- 198 \binom{\alpha}{1}+ 28 \binom{\alpha}{2}- \binom{\alpha}{3}=0$ has no integer solution. 

The coefficients of 
$(x^{4})^{\alpha-(j-2)}(y^{4})^{j+2}$ in $Q$ for $2 \leq j \leq \alpha-2$ are equal to  
$\left(\frac{j(j-1)}{(\alpha -j+2)(\alpha-j+1)} - \frac{28j}{\alpha-j+1} +198 
- \frac{28(\alpha-j)}{j+1} + \frac{(\alpha-j)(\alpha -j-1)}{(j+1)(j+2)} \right)
\binom{\alpha}{j} (-1)^{j-2}$.
By a computer search,  the equation \\
$\left(\frac{j(j-1)}{(\alpha -j+2)(\alpha-j+1)} - \frac{28j}{\alpha-j+1} +198 
- \frac{28(\alpha-j)}{j+1} + \frac{(\alpha-j)(\alpha -j-1)}{(j+1)(j+2)} \right)=0$ 
has no integer solution
for $5 \leq \alpha \leq 652$ and  $2 \leq j \leq \alpha -2$.

This completes the proof of Lemma \ref{lem:poly. zero 1} (2). 

\end{proof}

\begin{Lem}\label{lem:poly. zero 2}

Let $\alpha \geq 1$ and $R=(x^{4}-y^{4})^{\alpha}(x^{4}+y^{4})(x^{8}-34x^{4}y^{4}+y^{8})$. 
If the coefficients of 
$(x^{4})^{\alpha+3-i}(-y^{4})^{i}$ in $R$ are equal to $0$ for $0 \leq i \leq \frac{\alpha+3}{2}$, 
then $\alpha=2i-3$.
\end{Lem}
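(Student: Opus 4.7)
The plan is to reduce the vanishing condition on the coefficient of $(x^4)^{\alpha+3-i}(-y^4)^i$ in $R$ to an algebraic factorization argument. Writing $u=x^4$, $v=y^4$, I would first simplify the cubic factor,
\[
(u+v)(u^2-34uv+v^2) = u^3 - 33 u^2 v - 33 u v^2 + v^3,
\]
which is symmetric in $u,v$. Expanding $(u-v)^\alpha$ by the binomial theorem and collecting terms of total degree $\alpha+3$ in $R=(u-v)^\alpha(u^3-33u^2v-33uv^2+v^3)$, the coefficient of $u^{\alpha+3-i}(-v)^i$ comes out to
\[
F(\alpha,i) := \binom{\alpha}{i} + 33\binom{\alpha}{i-1} - 33\binom{\alpha}{i-2} - \binom{\alpha}{i-3}.
\]
The lemma thus reduces to: $F(\alpha,i)=0$ forces $\alpha=2i-3$ under $\alpha\ge 1$ and $0\le i\le(\alpha+3)/2$.

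Before computing I would note a structural reason to expect a distinguished algebraic factor. Because the cubic is symmetric under $u\leftrightarrow v$ while $(u-v)^\alpha\mapsto(-1)^\alpha(u-v)^\alpha$, a short check gives $F(\alpha,\alpha+3-i) = -F(\alpha,i)$. Setting $b := \alpha-2i+3$, the fixed locus $\alpha+3-i=i$ is exactly $b=0$, and there $F$ vanishes automatically by antisymmetry. This strongly suggests that $b$ divides $F(\alpha,i)$ algebraically, and converts the lemma into the statement that this ``symmetry zero'' is the \emph{only} zero.

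To make this explicit, for $i\ge 2$ I would factor $\binom{\alpha}{i-2}$ out of each of the four binomials in $F$ via the standard ratio identities, clear denominators by multiplying by $i(i-1)(\alpha-i+3)$, and substitute $a=\alpha-i+3=i+b$. A direct but careful expansion should then yield the clean identity
\[
\frac{i(i-1)(\alpha-i+3)}{\binom{\alpha}{i-2}}\, F(\alpha,i) \;=\; b\,\bigl(36i^2 + 36ib + b^2 - 6i - 3b + 2\bigr).
\]

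The final step is to verify that the cofactor $g(i,b) := 36i^2+36ib+b^2-6i-3b+2$ is strictly positive in the admissible region ($i\ge 0$, $b\ge 0$ arising from $\alpha\ge 1$). As a quadratic in $b$, its vertex lies at $b=(3-36i)/2<0$ for $i\ge 1$, so $g$ is increasing on $[0,\infty)$, and $g(i,0)=36i^2-6i+2\ge 32$ already settles those cases; for $i=0$ one has $g(0,b)=(b-1)(b-2)$, whose zeros $b\in\{1,2\}$ correspond to $\alpha\in\{-2,-1\}$, excluded by hypothesis. The boundary cases $i\in\{0,1\}$ of the original equation are handled by direct evaluation: $F(\alpha,0)=1$ and $F(\alpha,1)=\alpha+33$ are both nonzero for $\alpha\ge 1$. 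Thus $F(\alpha,i)=0$ forces $b=0$, i.e.\ $\alpha=2i-3$. The main obstacle is simply the algebraic bookkeeping in the factorization step; once the factor $b$ has been extracted, positivity of the cofactor is elementary, and pleasantly no Pell-type Diophantine detour (as in Lemma~\ref{lem:poly. zero 1}(1)) is required here.
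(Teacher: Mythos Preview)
Your argument is correct and follows essentially the same strategy as the paper's: both proofs expand the coefficient as a rational function of binomial ratios, extract the linear factor $\alpha-2i+3$ (your $b$, the paper's $\alpha-2j-1$ with $j=i-2$), and then show the remaining quadratic cofactor has definite sign on the admissible range. The paper reaches the factorization by first computing the coefficient in $(x^4-y^4)^\alpha(x^8-34x^4y^4+y^8)$ and then convolving with $(x^4+y^4)$, obtaining the cofactor $32j^2-32(\alpha-1)j-(\alpha+1)(\alpha+66)$, which it shows is negative; you instead multiply out the cubic $(u+v)(u^2-34uv+v^2)=u^3-33u^2v-33uv^2+v^3$ first and arrive at the equivalent cofactor $36i^2+36ib+b^2-6i-3b+2$, shown positive. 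Your explicit use of the antisymmetry $F(\alpha,\alpha+3-i)=-F(\alpha,i)$ to motivate why $b$ must divide is a nice expository touch that the paper does not make explicit, and your substitution $a=\alpha-i+3$, $b=a-i$ makes the factorization slightly cleaner, but the underlying idea is the same. One small redundancy: your discussion of $g(0,b)=(b-1)(b-2)$ is unnecessary, since the factorization identity is only valid for $i\ge 2$ and you already dispose of $i=0,1$ by the direct evaluations $F(\alpha,0)=1$ and $F(\alpha,1)=\alpha+33$.
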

\begin{proof}

If $\alpha=1$, $R=x^{16}-34x^{12}y^{4}+34x^{4}y^{12}+y^{16}$.
In this case, if the coefficients of 
$(x^{4})^{\alpha+3-i}(-y^{4})^{i}$ in $R$ are equal to $0$, then $i=2$.
For $\alpha = 2$, it is easily seen by a direct computation that 
the all coefficients of $(x^{4})^{\alpha+2-i}(-y^{4})^{i}$ in $R$ are not equal to $0$. 
Therefore, let $\alpha \geq 3$.
It is clear that the coefficient of 
$(x^{4})^{\alpha+3}$ in $R$ (similarly $(y^{4})^{\alpha+3}$) is equals to $1$.
The coefficient of 
$(x^{4})^{\alpha+2}(y^{4})$ in $R$ (similarly $(-1)^{\alpha}(x^{4})(y^{4})^{\alpha+2}$) is equals to 
$-34- \binom{\alpha}{1} +1<0$. 
The coefficient of 
$(x^{4})^{\alpha+1}(y^{4})^{2}$ in $R$ (similarly $(x^{4})^{2}(y^{4})^{\alpha+1}$) is equals to 
$1-34-\binom{\alpha}{1}(1-34) +\binom{\alpha}{2}=33 (\alpha-1) +\binom{\alpha}{2} >0$.

The coefficients of 
$(x^{4})^{\alpha-(j-1)}(y^{4})^{j+1}$ in $(x^{4}-y^{4})^{\alpha}(x^{8}-34x^{4}y^{4}+y^{8})$ 
for $1 \leq j \leq \alpha-1$ are the following formula: 

\begin{equation*}
\left( \frac{j}{\alpha-j+1} +34 + \frac{\alpha-j}{j+1} \right)
\binom{\alpha}{j} (-1)^{j-1}(x^{4})^{\alpha-(j-1)}(y^{4})^{j+1}.
\end{equation*}

Hence the coefficients of 
$(x^{4})^{\alpha-(j-1)}(y^{4})^{j+2}$ in $R$ 
for $1 \leq j \leq \alpha-1$ are the following formula: \\
$\left(  \frac{j}{\alpha-j+1}  +34 + \frac{\alpha-j}{j+1} \right) 
\binom{\alpha}{j} (-1)^{j-1}(x^{4})^{\alpha-(j-1)}(y^{4})^{j+1} \times y^{4}$ \\
$+\left( \frac{j+1}{\alpha-(j+1)+1} +34 + \frac{\alpha-(j+1)}{(j+1)+1} \right) 
\binom{\alpha}{j+1} (-1)^{j} (x^{4})^{\alpha-j}(y^{4})^{j+2} \times x^{4}$ \\
$=\left(\frac{j}{{\alpha}-j+1}+33-33\frac{({\alpha}-j)}{j+1}-\frac{({\alpha}-j)({\alpha}-j-1)}{(j+1)(j+2)}\right)\binom{\alpha}{j} (-1)^{j-1}(x^{4})^{\alpha-(j-1)}(y^{4})^{j+2}$. 


Let $$J=\frac{j}{{\alpha}-j+1}+33-33\frac{({\alpha}-j)}{j+1}-\frac{({\alpha}-j)({\alpha}-j-1)}{(j+1)(j+2)}.$$   
Then we have
\begin{eqnarray*}
J &=& \frac{j}{{\alpha}-j+1}+33-33\frac{({\alpha}-j)}{j+1}-\frac{({\alpha}-j)({\alpha}-j-1)}{(j+1)(j+2)} \\
  &=& \frac{33\alpha -32j+33}{\alpha-j+1} -\frac{(\alpha-j)(\alpha +32j +65)}{(j+1)(j+2)} \\
  &=& \frac{33(\alpha -2j-1)+34j+66}{\alpha-j+1} -\frac{(\alpha-j)(\alpha -2j -1)+(\alpha-j)(34j +66)}{(j+1)(j+2)} \\
  &=& \frac{(\alpha-2j-1) \Big( 33(j+1)(j+2)-(\alpha-j+1)(\alpha-j) -(\alpha+2)(34j+66) \Big)}{(\alpha-j+1)(j+1)(j+2)} \\
  &=& \frac{(\alpha-2j-1) \Big( 32j^{2}-32(\alpha-1)j -(\alpha+1)(\alpha+66) \Big)}{(\alpha-j+1)(j+1)(j+2)}.
\end{eqnarray*}
Since $3 \leq \alpha \leq 652$ and $1 \leq j \leq \alpha-1$, we have $32j^{2}-32(\alpha-1)j=32 (j - \frac{\alpha-1}{2})^{2} -8(\alpha-1)^{2} \leq 0$. 
Then we have $32j^{2}-32(\alpha-1)j -(\alpha+1)(\alpha+66)<0$. 
Hence if $J=0$, $\alpha= 2j+1$.



\end{proof}

\section{Proof of Theorems}\label{sec: proof of thm}
\subsection{Case for $n=24m$}

In this section, we consider the case of length $n=24m$. 
Let $D_{w}^{24m}$ be the support $t$-design of weight $w$ of 
an extremal binary doubly even self-dual $[24m,12m,4m+4]$ code $($$m \leq 153$$)$.
By Theorem~\ref{thm:Janusz} and Theorem~\ref{thm:minimum weight bound H-K-N} (1), 
we remark that if there exists $w'$ such that 
$D_{w'}^{24m}$ becomes a $6$-design,  
then $D_{w}^{24m}$  is a $7$-design for any $w$, and 
$m$ must be in the set 
$\{15$, $52$, $55$, $57$, $59$, $60$, $63$, $90$, $93$, $104$, $105$, $107$, $118$, $125$, $127$, $135$, $143$, $151 \}$.

For $t \geq 8$, we give the following proposition. 

\begin{Prop}\label{prop:length 24m}
For any extremal binary doubly even self-dual code of length $n=24m$, 
the support designs of all weights are $8$-designs or not simultaneously. 
\end{Prop}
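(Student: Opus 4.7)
The plan is to reduce the proposition to a single harmonic degree, namely $k=8$, and then invoke Lemma~\ref{lem:poly. zero 1}(1). Since an $8$-design is the same as a $T$-design for $T=\{1,2,\dots,8\}$, I first observe that every $D_{w}^{24m}$ is automatically a $\{1,2,3,4,5,7\}$-design by Theorem~\ref{thm:strengthening of the Assmus-Mattson theorem}, and that the $k=6$ condition already holds for all weights simultaneously or for none, by Theorem~\ref{Thm:Bannai et al. thm6}(1). Thus the whole dichotomy for $8$-designs will follow once the same ``all or none'' statement is proved at harmonic degree $k=8$.

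To handle $k=8$, I fix $f\in\Harm_{8}$ and use the explicit form of the harmonic weight enumerator from equation~(\ref{eqn:W_ 4l,8}) with $r=0$:
\[
W_{C,f}(x,y)=c(f)\,x^{4m+4}y^{4m+4}(x^{4}-y^{4})^{4m-4}(x^{8}+14x^{4}y^{4}+y^{8}).
\]
Every coefficient $c_{f}(w)$ is $c(f)$ times the corresponding coefficient of the polynomial $Q=(x^{4}-y^{4})^{4m-4}(x^{8}+14x^{4}y^{4}+y^{8})$. The key claim is then that no coefficient of $Q$ can vanish for any admissible $m$, for once this is established, $c_{f}(w_{0})=0$ for a single weight $w_{0}$ forces $c(f)=0$, and hence $c_{f}(w)=0$ for every admissible weight $w$ and every $f\in\Harm_{8}$.

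This nonvanishing is exactly what Lemma~\ref{lem:poly. zero 1}(1) provides, applied with $\alpha=4m-4$ and $\beta=1$. The valid range $m\le 153$ gives $0\le\alpha\le 608$, which lies within the lemma's range, and the only exceptional values of $\alpha$ listed there are $14$ and $223$. The routine but essential observation is then that $\alpha=4m-4$ is always divisible by $4$, whereas $14\equiv 2\pmod{4}$ and $223\equiv 3\pmod{4}$; hence neither exceptional value is attainable, and the claim is complete. The genuinely hard part has already been absorbed into Lemma~\ref{lem:poly. zero 1} itself, where the vanishing analysis is reduced to the Pell equation $X^{2}-3Y^{2}=1$; in the present proposition only the above modular incompatibility remains, and I expect no further obstacle.
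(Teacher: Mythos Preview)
Your proof is correct and follows essentially the same approach as the paper: both reduce the question to the harmonic degree $k=8$, use the explicit form of $W_{C,f}$ from equation~(\ref{eqn:W_ 4l,8}) with $r=0$, and invoke Lemma~\ref{lem:poly. zero 1}(1) for $\alpha=4m-4$. You are in fact more explicit than the paper on two points it leaves implicit: (i) you spell out why degrees $k\le 7$ are already covered (via Theorem~\ref{thm:strengthening of the Assmus-Mattson theorem} and Theorem~\ref{Thm:Bannai et al. thm6}(1)), and (ii) you give the modular reason ($4m-4\equiv 0\pmod 4$, whereas $14,223\not\equiv 0$) for discarding the exceptional $\alpha$-values, whereas the paper simply asserts that all coefficients are nonzero when $c(f)\neq 0$.
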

\begin{proof}
If $r=0$ in the equation (\ref{eqn:W_ 4l,8}), we have  
$$W_{C,f}(x,y)  =c(f) x^{4m+4}y^{4m+4} (x^{4}-y^{4})^{4m-4} (x^{8}+14x^{4}y^{4}+y^{8}).$$
We recall that 
$C$ does not exist if $n=24m$ ($m \geq 154)$ \cite{Zhang(1999)}. 
By Lemma~\ref{lem:poly. zero 1} (1), 
the coefficients of $x^{i}$ with $i \equiv 0 \pmod{4}$ and $4m+4 \leq i \leq n-(4m+4)$ are 
all nonzero if $c(f)\neq 0$ or zero if $c(f)=0$ for $m\leq 153$. 
Therefore, the support designs of all weights are $8$-designs or not simultaneously. 
\end{proof}

We apply the results of Theorem~\ref{thm:minimum weight bound H-K-N} (1)
to Proposition~\ref{prop:length 24m}.
Then we obtain the following theorem.

\begin{Thm}\label{thm:main thm 1}
 $D_{w}^{24m}$ is never an $8$-design for any $w$.
\end{Thm}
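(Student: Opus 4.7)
The plan is to combine Proposition \ref{prop:length 24m} with part (1) of Theorem \ref{thm:minimum weight bound H-K-N}, which already handles the minimum weight case.

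First I would argue by contradiction: assume that there exists some valid weight $w$ such that $D_{w}^{24m}$ is an $8$-design. Being an $8$-design in particular implies being a $6$-design, so we are already in the situation where Proposition \ref{prop:length 24m} applies non-trivially. By Proposition \ref{prop:length 24m}, the property of being an $8$-design is shared by $D_{w}^{24m}$ across all admissible weights $w$ simultaneously (this is the content of the ``all zero or all nonzero'' conclusion drawn from Lemma \ref{lem:poly. zero 1}~(1) for the relevant range of $\alpha$ with $m\leq 153$). In particular, taking $w = 4m+4$, we conclude that the minimum weight support design $D_{4m+4}^{24m}$ must also be an $8$-design.

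Next I would invoke Theorem \ref{thm:minimum weight bound H-K-N}~(1), which explicitly states that $D_{4m+4}^{24m}$ is never an $8$-design. This contradicts the conclusion of the previous step, completing the proof.

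There is essentially no obstacle here: the heavy lifting has already been done in Proposition \ref{prop:length 24m} (which in turn rests on the Pell-equation analysis of Lemma \ref{lem:poly. zero 1}~(1) ruling out accidental vanishing of coefficients of the harmonic weight enumerator in the relevant range $0\leq\alpha\leq 652$) and in Theorem \ref{thm:minimum weight bound H-K-N}~(1) (where the minimum weight case is treated separately). The only thing to check is that the range of weights $w$ for which the support design is defined, namely $w\equiv 0\pmod 4$ with $4\lfloor n/24\rfloor+4\leq w\leq n-(4\lfloor n/24\rfloor+4)$, includes $w=4m+4$, which is immediate. Thus the theorem follows as a direct corollary.
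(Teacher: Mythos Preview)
Your proposal is correct and follows exactly the same route as the paper: combine Proposition~\ref{prop:length 24m} (all weights are $8$-designs simultaneously or none are) with Theorem~\ref{thm:minimum weight bound H-K-N}~(1) (the minimum-weight design is never an $8$-design). The remark about an $8$-design being in particular a $6$-design is superfluous---Proposition~\ref{prop:length 24m} applies unconditionally---but it does no harm.
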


Thus the proof of Theorem \ref{thm:main upper bound} (1) is completed.

\subsection{Case for $24m+8$}

In this section, we state the cases of length $n=24m+8$. 
Let $D_{w}^{24m+8}$ be the support $t$-design of weight $w$ of 
an extremal binary doubly even self-dual $[24m+8,12m+4,4m+4]$ code ($m \leq 158$).
By Theorem~\ref{thm:Janusz} and Theorem~\ref{thm:minimum weight bound H-K-N} (2), 
we remark that if there exists $w'$ such that 
$D_{w'}^{24m+8}$ becomes a $4$-design,  
then $D_{w}^{24m+8}$ is a $5$-design for any $w$, and 
$m$ must be in the set 
$\{15$, $35$, $45$, $58$, $75$, $85$, $90$, $95$, $113$, $115$, $120$, $125 \}$.

For $t \geq 6$, we give the following proposition.  
We call $w$ the middle weight if $w= n / 2$.

\begin{Prop}\label{prop:length 24m+8}
Let $D_{w}^{24m+8}$ be the support $t$-design of weight $w$ of 
an extremal binary doubly even self-dual code of length $n=24m+8$.

\begin{enumerate}
\item[$(1)$] 

\begin{enumerate}
\item[{\rm (i)}] Assume that $m \neq 4$. 
Then $D_{w}^{24m+8}$ of all weights $w$ are $6$-designs or not simultaneously. 
\item[{\rm (i\hspace{-.1em}i)}] 
Assume that $m= 4$. \\
Then $D_{w}^{104}$ is a 
$
\begin{cases}
\{1,2,3,5 \}\text{-design} & \text{if}\  w \neq 24 \\
\{1,2,3,5,6 \}\text{-design} & \text{if}\  w= 24. 
\end{cases}
$
\end{enumerate}

\item[$(2)$] 
\begin{enumerate}
\item[{\rm (i)}] 
$D_{w}^{24m+8}$ of all weights $w$ except for $w = 12m+4$ are $7$-designs 
or not simultaneously. 
\item[{\rm (i\hspace{-.1em}i)}] 
$D_{12m+4}^{24m+8}$ is a $\{1,2,3,5,7 \}$-design.

\end{enumerate}

\item[$(3)$] 

\begin{enumerate}
\item[{\rm (i)}] Assume that $m \neq 8$. 
Then $D_{w}^{24m+8}$ of all weights $w$ are $8$-designs or not simultaneously. 
\item[{\rm (i\hspace{-.1em}i)}] 
Assume that $m= 8$. \\
Then $D_{w}^{200}$ is a 
$
\begin{cases}
\{1,2,3,5 \}\text{-design} & \text{if}\  w \neq 40 \\
\{1,2,3,5,8 \}\text{-design} & \text{if}\  w= 40. 
\end{cases}
$
\end{enumerate}


\end{enumerate}

\end{Prop}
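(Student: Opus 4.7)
The plan is to apply the explicit harmonic weight enumerator formulas of Section~\ref{subsec:hw} (the three $r=1$ cases, equations~(\ref{eqn:W_ 4l+2}), (\ref{eqn:W_ 4l+3}), (\ref{eqn:W_ 4l,8})) together with Lemmas~\ref{lem:poly. zero 1} and~\ref{lem:poly. zero 2}. Recall that $D_w$ is a $j$-design iff $c_f(w)=0$ for every $f\in\Harm_j$, and by Theorem~\ref{thm:strengthening of the Assmus-Mattson theorem} the conditions for $j\in\{1,2,3,5\}$ hold automatically when $n\equiv 8\pmod{24}$, so in each of the three parts only $j=t$ needs inspection. The key structural observation is that, for every $t\in\{6,7,8\}$ and $r=1$, the enumerator factors as
\[
W_{C,f}(x,y)=c(f)\,(xy)^{4m+4}\,P_t(x,y),
\]
where $P_t$ is a universal polynomial in $x^4,y^4$ depending only on $m$. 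Writing $p_w$ for the coefficient of $x^{n-w}y^w$ in $(xy)^{4m+4}P_t$, one has $c_f(w)=c(f)\,p_w$; hence if $p_w\neq 0$ the vanishing of $c_f(w)$ reduces to the $w$-independent condition $c(f)=0$ on $C$, so all such weights behave simultaneously, whereas any weight with $p_w=0$ is automatically forced to be a $t$-design.

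For part~(1) I would set $\alpha=4m-2$, $\beta=1$ in Lemma~\ref{lem:poly. zero 1}(1). The exceptional pairs $(\alpha,i)=(14,1)$ and $(223,15)$ translate via $w=4m+4+4i$ into $(m,w)=(4,24)$ and $m=225/4$ respectively; the latter is non-integral and is discarded, so only $m=4$ is exceptional, yielding~(i) for $m\neq 4$ and~(ii) for $m=4$. For part~(2) I would set $\alpha=4m-3$ in Lemma~\ref{lem:poly. zero 2}; the vanishing condition $\alpha=2i-3$ forces $i=2m$, whence $w=4m+4+4(2m)=12m+4=n/2$ for every $m$. Thus the middle-weight coefficient $p_{12m+4}$ always vanishes while all other $p_w$ are nonzero, proving~(i) away from the middle weight and~(ii) at the middle weight at once. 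For part~(3) I would set $\alpha=4m-4$, $\beta=2$ in Lemma~\ref{lem:poly. zero 1}(2); the unique exceptional pair $(\alpha,i)=(28,1)$ gives $(m,w)=(8,40)$, and the conclusion follows by the same dichotomy as in part~(1).

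The main obstacle is not in this proposition itself but in the polynomial coefficient analyses already encapsulated in Lemmas~\ref{lem:poly. zero 1} and~\ref{lem:poly. zero 2} (notably the Pell-equation step underlying Lemma~\ref{lem:poly. zero 1}(1)). Granted those, the argument here is essentially bookkeeping: translating the dichotomy ``$p_w=0$ or $p_w\neq 0$'' into the stated simultaneity statements, and checking that among the exceptional values of $\alpha$ only $\alpha\in\{14,28\}$ (and, in part~(2), the automatic middle-weight case) correspond to an integral $m$ in the admissible range $m\leq 158$.
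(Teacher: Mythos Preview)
Your proposal is correct and follows essentially the same route as the paper's own proof: in each part you invoke the corresponding $r=1$ harmonic weight enumerator formula (equations~(\ref{eqn:W_ 4l+2}), (\ref{eqn:W_ 4l+3}), (\ref{eqn:W_ 4l,8})), apply Lemma~\ref{lem:poly. zero 1} or~\ref{lem:poly. zero 2} with the appropriate $\alpha$, and read off the exceptional weights. Your explicit factorisation $c_f(w)=c(f)\,p_w$ and the check that $(\alpha,i)=(223,15)$ yields a non-integral $m$ for $\alpha=4m-2$ are exactly the bookkeeping the paper performs (the latter point being deferred to Remark~3.3 there), so there is no substantive difference.
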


\begin{proof}
$(1)$
If $r=1$ in the equation (\ref{eqn:W_ 4l+2}), we have  
$$W_{C,f}(x,y)  =c(f) x^{4m+4}y^{4m+4} (x^{4}-y^{4})^{4m-2} (x^{8}+14x^{4}y^{4}+y^{8}).$$
By Lemma~\ref{lem:poly. zero 1} (1), if $m \neq 4$, the coefficients of $x^{i}$ with $i \equiv 0 \pmod{4}$ and $4m+4 \leq i \leq n-(4m+4)$ are 
all nonzero or zero at the same time. 
Therefore, if $m \neq 4$, $D_{w}^{24m+8}$ of all weights $w$ are $6$-designs or not simultaneously. 

Let $m=4$. By Lemma~\ref{lem:poly. zero 1} (1), 
if $i \neq 24$, the coefficients of $x^{i}$ with $i \equiv 0 \pmod{4}$ and $20 \leq i \leq 84$ are 
all nonzero or zero at the same time. 
Also, the coefficient of $x^{24}$ is equals to $0$. 
Therefore, if $w \neq 24$, $D_{w}^{104}$ is a $\{1,2,3,5 \}$-design. 
Also, $D_{24}^{104}$ is a $\{1,2,3,5,6 \}$-design.

$(2)$
By the equation (\ref{eqn:W_ 4l+3}), we have  
$$W_{C,f}(x,y) =c(f) x^{4m+4}y^{4m+4} (x^{4}-y^{4})^{4m-3}(x^{4}+y^{4})(x^{8}-34x^{4}y^{4}+y^{8}).$$
By Lemma~\ref{lem:poly. zero 2}, if  $i \neq 12m+4$, the coefficients of $x^{i}$ with $i \equiv 0 \pmod{4}$ and $4m+4 \leq i \leq n-(4m+4)$ are 
all nonzero or zero at the same time. 
Therefore, the support designs of all weights except for the middle weight are $7$-designs or not simultaneously. 

We consider the case that $w$ is the middle weight. By Lemma~\ref{lem:poly. zero 2}, 
the coefficient of $x^{12m+4}$ is equals to $0$. 
Hence $D_{12m+4}^{24m+8}$ is a $\{1,2,3,5,7 \}$-design.

$(3)$
If $r=1$ in the equation (\ref{eqn:W_ 4l,8}), we have  
$$W_{C,f}(x,y)  =c(f) x^{4m+4}y^{4m+4} (x^{4}-y^{4})^{4m-4} (x^{8}+14x^{4}y^{4}+y^{8})^{2}.$$

By Lemma~\ref{lem:poly. zero 1} (2), if $m \neq 8$, 
the coefficients of $x^{i}$ with $i \equiv 0 \pmod{4}$ and $4m+4 \leq i \leq n-(4m+4)$ are 
all nonzero or zero at the same time. 
Therefore, if $m \neq 8$, the support designs of all weights are $8$-designs or not simultaneously. 

Let $m=8$. By Lemma~\ref{lem:poly. zero 1} (2), 
if $i \neq 40$, the coefficients of $x^{i}$ with $i \equiv 0 \pmod{4}$ and $36 \leq i \leq 164$ are 
all nonzero or zero at the same time.  
Also, the coefficient of $x^{40}$ is equals to $0$. 
Therefore, if $w \neq 40$, $D_{w}^{200}$ is a $\{1,2,3,5 \}$-design. 
Also, $D_{40}^{200}$ is a $\{1,2,3,5,8 \}$-design.
\end{proof}

\begin{Rem} 
In Lemma~\ref{lem:poly. zero 1} (1), the solution $(\alpha,i)=(223,15)$ 
corresponds to the polynomial $Q=(x^{4}-y^{4})^{223} (x^{8}+14x^{4}y^{4}+y^{8})$.
In the case $t=9$ and $r=1$, if $C$ is extremal, then the harmonic weight enumerator is
\begin{align}
W_{C,f}(x,y) &=c(f) (xy)^{9} P_{30}P_{8} P_{24}^{m-2}\notag\\
&=c(f) x^{4m+4}y^{4m+4} (x^{4}-y^{4})^{4m-5}(x^{4}+y^{4})\notag\\
&\hspace{15pt}(x^{8}+14x^{4}y^{4}+y^{8})(x^{8}-34x^{4}y^{4}+y^{8}). \label{eqn:W_ 9}
\end{align}

The polynomial $Q$ is contained in the case of $m=57$ in the equation (\ref{eqn:W_ 9}).  
By a computation, the coefficients of $x^{i}$ in the equation (\ref{eqn:W_ 9}) 
with $i \equiv 0 \pmod{4}$ and $4m+4 \leq i \leq n-(4m+4)$  
are not equal to $0$. 
Thus the solution $(\alpha,i)=(223,15)$ does not give a design.


\end{Rem}

We apply  the results of Theorem~\ref{thm:minimum weight bound H-K-N} (2)
to Proposition~\ref{prop:length 24m+8}.
Then we obtain the following theorem.

\begin{Thm}\label{thm:main thm 2}
Let $D_{w}^{24m+8}$ be the support $t$-design of weight $w$ of 
an extremal binary doubly even self-dual $[24m+8,12m+4,4m+4]$ code $($$m \leq 158$$)$.

\begin{enumerate}


\item[$(1)$] 
\begin{enumerate}

\item[{\rm (i)}]

In the case $w \neq 12m+4$. If $D_{w}^{24m+8}$ becomes a $6$-design, then $m$ must be $58$. 
If $D_{w}^{24m+8}$ becomes a $7$-design, then $m$ must be $58$.

\item[{\rm (i\hspace{-.1em}i)}]
In the case $w = 12m+4$.
If $D_{12m+4}^{24m+8}$ becomes a $6$-design, 
then $D_{12m+4}^{24m+8}$ becomes a $7$-design and $m$ must be $58$. 

\end{enumerate}

\item[$(2)$] $D_{w}^{24m+8}$ is never a $8$-design for any $w$.

\end{enumerate}
\end{Thm}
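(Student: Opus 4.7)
The plan is to combine three ingredients already in place: Proposition~\ref{prop:length 24m+8}, which controls how the design property propagates between weights via the harmonic weight enumerator structure; Theorem~\ref{thm:minimum weight bound H-K-N}(2), the classification of minimum-weight support designs at length $24m+8$; and Theorem~\ref{Thm:Bannai et al. thm6}(2), the Bannai--Koike--Shinohara--Tagami simultaneous $4$-design principle. In every case the strategy is the same: use the propagation of Proposition~\ref{prop:length 24m+8} to pull a hypothetical high-$t$ design at weight $w$ down to the minimum weight $4m+4$, then apply Theorem~\ref{thm:minimum weight bound H-K-N}(2) to pin $m$.

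For part (1)(i), with $w \neq 12m+4$, suppose $D_w^{24m+8}$ is a $6$-design. If $m \neq 4$, Proposition~\ref{prop:length 24m+8}(1)(i) forces $D_{w'}^{24m+8}$ to be a $6$-design for every admissible $w'$, in particular for $w' = 4m+4$, and Theorem~\ref{thm:minimum weight bound H-K-N}(2) then pins $m = 58$. If $m = 4$, Proposition~\ref{prop:length 24m+8}(1)(ii) shows $D_w^{104}$ is only a $\{1,2,3,5\}$-design for $w \neq 24$, so it cannot be a $6$-design. The $7$-design assertion is immediate, since any $7$-design is a fortiori a $6$-design. For part (1)(ii), if $D_{12m+4}^{24m+8}$ is a $6$-design, it is also a $\{1,2,3,5,7\}$-design by Proposition~\ref{prop:length 24m+8}(2)(ii), hence a full $7$-design; the value $m=58$ is then extracted exactly as in (1)(i), noting that for $m=4$ the middle weight is $52 \neq 24$, so Proposition~\ref{prop:length 24m+8}(1)(ii) again excludes $D_{52}^{104}$ from being a $6$-design.

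For part (2), suppose $D_w^{24m+8}$ is an $8$-design. If $m \neq 8$, Proposition~\ref{prop:length 24m+8}(3)(i) makes $D_{4m+4}^{24m+8}$ an $8$-design, contradicting Theorem~\ref{thm:minimum weight bound H-K-N}(2). If $m = 8$, the only candidate is $w = 40$, where $D_{40}^{200}$ is only guaranteed to be a $\{1,2,3,5,8\}$-design; upgrading it to a genuine $8$-design would require it also to be a $4$-design, and by Theorem~\ref{Thm:Bannai et al. thm6}(2) this $4$-design property would propagate to the minimum weight $w = 36$, which Theorem~\ref{thm:minimum weight bound H-K-N}(2) forbids for $m = 8$.

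The main obstacle lies in the two sporadic exponents $m = 4$ and $m = 8$, arising from the exceptional vanishings in Lemma~\ref{lem:poly. zero 1}---specifically the Pell-equation solution $(\alpha, j) = (14, 1)$ for $\beta = 1$ and the coincidence $\alpha = 28$ for $\beta = 2$---at which the simultaneity principle of Proposition~\ref{prop:length 24m+8} fails at a single anomalous weight rather than globally. It is exactly at these values that Theorem~\ref{Thm:Bannai et al. thm6}(2) must be invoked to force the failure of a $4$-design at some other weight, after which Theorem~\ref{thm:minimum weight bound H-K-N}(2) delivers the needed contradiction.
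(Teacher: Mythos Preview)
Your overall strategy---propagate the design property from an arbitrary weight down to the minimum weight via Proposition~\ref{prop:length 24m+8}, then invoke Theorem~\ref{thm:minimum weight bound H-K-N}(2)---is exactly the paper's approach, and your handling of the sporadic case $m=8$ in Part~(2) via Theorem~\ref{Thm:Bannai et al. thm6}(2) is correct.

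There is, however, a genuine gap in your treatment of $m=4$ in Parts~(1)(i) and~(1)(ii). You write that Proposition~\ref{prop:length 24m+8}(1)(ii) ``shows $D_w^{104}$ is only a $\{1,2,3,5\}$-design for $w\neq 24$, so it cannot be a $6$-design'', and similarly that it ``excludes $D_{52}^{104}$ from being a $6$-design''. But the proposition is a \emph{positive} statement: it guarantees that $D_w^{104}$ is at least a $\{1,2,3,5\}$-design; it does not assert, and its proof does not show, that $D_w^{104}$ fails to be a $\{6\}$-design. Whether the linear functional $c(f)$ vanishes identically on $\Harm_6$ is code-dependent, so the proposition as stated cannot rule out a genuine $6$-design at any $w\neq 24$. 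You also leave the case $w=24$ (which satisfies $w\neq 12m+4=52$) entirely unaddressed in Part~(1)(i).

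The fix is precisely the mechanism you already deploy for $m=8$ and describe in your closing paragraph: if $D_w^{104}$ is a $6$-design for any $w$, it is in particular a $4$-design, so by Theorem~\ref{Thm:Bannai et al. thm6}(2) every $D_{w'}^{104}$ is a $4$-design, in particular $D_{20}^{104}$; but Theorem~\ref{thm:minimum weight bound H-K-N}(2) permits a $4$-design at the minimum weight only for $m\in\{15,35,45,58,\ldots\}$, and $4$ is not in that list. You simply need to apply this argument at $m=4$ rather than appealing to a negative reading of Proposition~\ref{prop:length 24m+8}(1)(ii) that the proposition does not support.
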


Thus the proof of Theorem \ref{thm:main upper bound} (2) is completed. 

\subsection{Case for $24m+16$}

In this section, we state the cases of length $n=24m+16$. 
Let $D_{w}^{24m+16}$ be the support $t$-design of weight $w$ of 
an extremal binary doubly even self-dual $[24m+16,12m+8,4m+4]$ code ($m \leq 163$).
By Theorem~\ref{thm:Janusz} and Theorem~\ref{thm:minimum weight bound H-K-N} (3), 
we remark that if there exists $w'$ such that 
$D_{w'}^{24m+16}$ becomes a $2$-design, 
then $D_{w}^{24m+16}$ is a $3$-design for any $w$, and 
$m$ must be in the set 
$\{5$, $10$, $20$, $23$, $25$, $35$, $44$, $45$, $50$, $55$, 
$60$, $70$, $72$, $75$, $79$, 
$80$, $85$, $93$, $95$, $110$, $118$, $120$, 
$121$, $123$, $125$, $130$, $142$, $144$, 
$145$, $149$, $150$, $155$, $156$, $157$, 
$160$, $163$$\}$. 

For $t \geq 4$, we give the following proposition. 

\begin{Prop}\label{prop:length 24m+16}
Let $D_{w}^{24m+16}$ be the support $t$-design of weight $w$ of 
an extremal binary doubly even self-dual code of length $n=24m+16$.

\begin{enumerate}
\item[$(1)$] $D_{w}^{24m+16}$ of all weights $w$ are $4$-designs or not simultaneously. 

\item[$(2)$] 
\begin{enumerate}
\item[{\rm (i)}] 
$D_{w}^{24m+16}$ of all weights $w$ except for the middle weight 
$($$w = 12m+8$$)$ are $5$-designs or not simultaneously. 

\item[{\rm (i\hspace{-.1em}i)}] 
$D_{12m+8}^{24m+16}$ is a $\{1,2,3,5 \}$-design.
\end{enumerate}

\item[$(3)$] $D_{w}^{24m+16}$ of all weights $w$ are $6$-designs or not simultaneously. 

\end{enumerate}

\end{Prop}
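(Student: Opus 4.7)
The plan is to proceed as in the proof of Proposition~\ref{prop:length 24m+8}, specializing the harmonic weight enumerators of Section~\ref{subsec:hw} to $r = 2$ for each of $t = 4, 5, 6$, and reading off the zero coefficients via Lemmas~\ref{lem:poly. zero 1} and \ref{lem:poly. zero 2}. Throughout, the exponent $\alpha$ of $(x^4 - y^4)$ equals $4m$, $4m-1$, or $4m-2$, and Zhang's non-existence bound $m \le 163$ keeps all relevant $\alpha$'s inside the range $[0, 652]$ where those lemmas were proved.

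For parts (1) and (3), I invoke Lemma~\ref{lem:poly. zero 1}. Equation (\ref{eqn:W_ 4l}) gives $W_{C,f} = c(f)\,(xy)^{4m+4}(x^4 - y^4)^{4m}(x^8 + 14x^4y^4 + y^8)$, hence $(\alpha, \beta) = (4m, 1)$; the exceptional values $\alpha \in \{14, 223\}$ in Lemma~\ref{lem:poly. zero 1}(1) are not multiples of $4$, so no admissible $m$ produces a vanishing interior coefficient, and every $c_f(w)$ with $4m+4 \le w \le 20m+12$ is zero or nonzero in a uniform way in $w$. Combined with the $\{1,3\}$-design property from Theorem~\ref{thm:strengthening of the Assmus-Mattson theorem} and the $\Harm_2$ simultaneity of Theorem~\ref{Thm:Bannai et al. thm6}(3), this yields (1). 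A parallel application of Lemma~\ref{lem:poly. zero 1}(2) to equation (\ref{eqn:W_ 4l+2}) at $r = 2$, with $(\alpha, \beta) = (4m-2, 2)$, disposes of (3): the only exceptional value $\alpha = 28$ would force $m = 15/2 \notin \Z$.

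Part (2) is the one case in which the exceptional family of Lemma~\ref{lem:poly. zero 2} meets our parameters. From equation (\ref{eqn:W_ 4l+1}) one has $\alpha = 4m - 1$, and the lemma's vanishing relation $\alpha = 2i - 3$ has the unique integer solution $i = 2m + 1$. Under the translation $w = 20m + 12 - 4i$ this corresponds precisely to the middle weight $w = 12m + 8$. Consequently, (i) for every non-middle weight the $\Harm_5$-coefficient is uniformly zero or uniformly nonzero in $f$, while (ii) the coefficient at the middle weight vanishes for every $f$, and combining with Theorem~\ref{thm:strengthening of the Assmus-Mattson theorem} and Theorem~\ref{Thm:Bannai et al. thm6}(3) promotes $D_{12m+8}^{24m+16}$ to a $\{1,2,3,5\}$-design.

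The main obstacle is essentially bookkeeping: matching the lemma index $i$ to the weight $w = 20m + 12 - 4i$, verifying that the exceptional $\alpha$'s $\{14, 28, 223\}$ from Lemma~\ref{lem:poly. zero 1} are not realized for any integer $m$ with $\alpha \in \{4m,\, 4m-2\}$, and noting that Lemma~\ref{lem:poly. zero 2} with $\alpha = 4m-1$ concentrates its single vanishing index at the middle weight. Once these matchings are lined up, the three parts reduce to direct invocations of the already-proved number-theoretic content of Lemmas~\ref{lem:poly. zero 1} and \ref{lem:poly. zero 2}.
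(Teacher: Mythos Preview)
Your proposal is correct and follows exactly the paper's route: specialize the harmonic weight enumerators (\ref{eqn:W_ 4l}), (\ref{eqn:W_ 4l+1}), (\ref{eqn:W_ 4l+2}) to $r=2$ and read off the vanishing pattern of the coefficients via Lemmas~\ref{lem:poly. zero 1} and \ref{lem:poly. zero 2}. You are in fact more explicit than the paper in verifying that the exceptional $\alpha$-values $14$, $223$ (resp.\ $28$) are never of the form $4m$ (resp.\ $4m-2$), and in pinpointing that the single vanishing index $i=2m+1$ of Lemma~\ref{lem:poly. zero 2} with $\alpha=4m-1$ lands precisely on the middle weight.
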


\begin{proof}
$(1)$
If $r=2$ in the equation (\ref{eqn:W_ 4l}), we have  
$$W_{C,f}(x,y)  =c(f) x^{4m+4}y^{4m+4} (x^{4}-y^{4})^{4m} (x^{8}+14x^{4}y^{4}+y^{8}).$$
By Lemma~\ref{lem:poly. zero 1} (1), the coefficients of $x^{i}$ with $i \equiv 0 \pmod{4}$ and $4m+4 \leq i \leq n-(4m+4)$ are 
all nonzero or zero at the same time. 
Therefore, the support designs of all weights are $4$-designs or not simultaneously.

$(2)$
By the equation (\ref{eqn:W_ 4l+1}), we have  
$$W_{C,f}(x,y) =c(f) x^{4m+4}y^{4m+4} (x^{4}-y^{4})^{4m-1}(x^{4}+y^{4})(x^{8}-34x^{4}y^{4}+y^{8}).$$
By Lemma~\ref{lem:poly. zero 2}, if  $i \neq 12m+8$, the coefficients of $x^{i}$ with $i \equiv 0 \pmod{4}$ and $4m+4 \leq i \leq n-(4m+4)$ are 
all nonzero or zero at the same time. 
Therefore, the support designs of all weights except for the middle weight are $5$-designs or not simultaneously. 

We consider the case that $w$ is the middle weight. By Lemma~\ref{lem:poly. zero 2}, the coefficient of $x^{12m+8}$ is equals to $0$. 
Hence $D_{12m+8}^{24m+16}$ is a $\{1,2,3,5 \}$-design.

$(3)$
If $r=2$ in the equation (\ref{eqn:W_ 4l+2}), we have  
$$W_{C,f}(x,y)  =c(f) x^{4m+4}y^{4m+4} (x^{4}-y^{4})^{4m-2} (x^{8}+14x^{4}y^{4}+y^{8})^{2}.$$
By Lemma~\ref{lem:poly. zero 1} (2), the coefficients of $x^{i}$ with $i \equiv 0 \pmod{4}$ and $4m+4 \leq i \leq n-(4m+4)$ are 
all nonzero or zero at the same time. 
Therefore, the support designs of all weights are $6$-designs or not simultaneously. 

\end{proof}

We apply the results of Theorem~\ref{thm:minimum weight bound H-K-N} (3) 
to Proposition~\ref{prop:length 24m+16}. 
Then we obtain the following theorem.

\begin{Thm}\label{thm:main thm 3}
Let $D_{w}^{24m+16}$ be the support $t$-design of weight $w$ of 
an extremal binary doubly even self-dual $[24m+16,12m+8,4m+4]$ code $($$m \leq 163$$)$.

\begin{enumerate}




\item[$(1)$] 

\begin{enumerate}
\item[{\rm (i)}]
In the case $w \neq 12m+8$.
If $D_{w}^{24m+16}$ becomes a $4$-design,  
then $m$ must be in the set 
$\{10$, $23$, $79$, 
$93$, $118$, $120$, 
$123$, $125$, $142$$\}$. 

If $D_{w}^{24m+16}$ becomes a $5$-design,  
then $m$ must be in the set 
$\{23$, $79$, 
$93$, $118$, $120$, 
$123$, $125$, $142$$\}$.


\item[{\rm (i\hspace{-.1em}i)}]
In the case $w = 12m+8$.
If $D_{12m+8}^{24m+16}$ becomes a $4$-design, 
then $D_{12m+8}^{24m+16}$ becomes a $5$-design and $m$ must be in the set 
$\{10$, $23$, $79$, 
$93$, $118$, $120$, 
$123$, $125$, $142$$\}$. 

\end{enumerate}

\item[$(2)$] $D_{w}^{24m+16}$ is never a $6$-design for any $w$.

\end{enumerate}
\end{Thm}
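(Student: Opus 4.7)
The plan is to derive Theorem~\ref{thm:main thm 3} essentially as a direct consequence of Proposition~\ref{prop:length 24m+16}, which controls the simultaneity of the $t$-design property across weights for $n=24m+16$, combined with the already-established minimum-weight bound Theorem~\ref{thm:minimum weight bound H-K-N}~(3). The key observation that makes everything line up is that the minimum weight $4m+4$ is never equal to the middle weight $12m+8$, so any conclusion that transfers a design property to ``all weights'' or ``all non-middle weights'' automatically applies to the minimum weight, at which point the enumeration of admissible $m$ from Theorem~\ref{thm:minimum weight bound H-K-N}~(3) can be invoked.

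For part~(1)(i) I would argue as follows. If some $D_w^{24m+16}$ with $w\neq 12m+8$ is a $4$-design, Proposition~\ref{prop:length 24m+16}~(1) makes every $D_w^{24m+16}$ a $4$-design, in particular $D_{4m+4}^{24m+16}$, and Theorem~\ref{thm:minimum weight bound H-K-N}~(3) then forces $m$ into the nine-element list $\{10,23,79,93,118,120,123,125,142\}$. The $5$-design subcase is analogous: Proposition~\ref{prop:length 24m+16}~(2)(i) transfers the $5$-design property uniformly among the non-middle weights, so $D_{4m+4}^{24m+16}$ is a $5$-design, and Theorem~\ref{thm:minimum weight bound H-K-N}~(3) further removes $m=10$ from the list.

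For part~(1)(ii), the first claim—that a $4$-design at $w=12m+8$ is automatically a $5$-design—comes from Proposition~\ref{prop:length 24m+16}~(2)(ii), which says that $D_{12m+8}^{24m+16}$ is always a $\{1,2,3,5\}$-design; adjoining the $4$-design hypothesis fills in the index $4$ and yields a $\{1,2,3,4,5\}$-design, i.e.\ a $5$-design. For the restriction on $m$, I would not try to push a $5$-design outward from the middle weight (Proposition~\ref{prop:length 24m+16}~(2)(i) does not do this), but rather use the weaker $4$-design hypothesis and Proposition~\ref{prop:length 24m+16}~(1) to reach the minimum weight and invoke Theorem~\ref{thm:minimum weight bound H-K-N}~(3) again.

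Part~(2) is essentially immediate: if some $D_w^{24m+16}$ were a $6$-design, Proposition~\ref{prop:length 24m+16}~(3) would force $D_{4m+4}^{24m+16}$ to be a $6$-design as well, contradicting the final clause of Theorem~\ref{thm:minimum weight bound H-K-N}~(3). I do not foresee any real obstacle in assembling this proof; the substantive combinatorics has been absorbed into Lemmas~\ref{lem:poly. zero 1} and~\ref{lem:poly. zero 2} and into the minimum-weight theorem. The only subtlety worth flagging is the asymmetric role of the middle weight in part~(1)(ii), which is why one routes through the $4$-design direction and the automatic $\{1,2,3,5\}$-structure rather than trying to use a $5$-design hypothesis at $w=12m+8$ directly.
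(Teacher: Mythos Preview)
Your proposal is correct and follows exactly the paper's approach: the paper's entire proof consists of the sentence ``We apply the results of Theorem~\ref{thm:minimum weight bound H-K-N}~(3) to Proposition~\ref{prop:length 24m+16},'' and you have simply spelled out how that application goes, including the (implicit but necessary) observation that $4m+4\neq 12m+8$ so the minimum weight is always a non-middle weight.
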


Thus the proof of Theorem \ref{thm:main upper bound} (3) is completed.

\begin{Rem}
Let $\mathcal{D}=(X, \mathcal{B})$ be a $t$-design. 
The complementary design of $\mathcal{D}$ is $\bar{\mathcal{D}}=(X, \bar{\mathcal{B}})$, 
where $\bar{\mathcal{B}}= \{ X \setminus B : B \in \mathcal{B} \}$.
If $\mathcal{D}= \bar{\mathcal{D}}$, $\mathcal{D}$ is called a self-complementary design.
Let $D_{n/2}$ be the support $t$-design of the middle weight of 
an extremal binary doubly even self-dual code of length $n$. 
It is easily seen that $D_{n/2}$ is self-complement.


Alltop \cite{alltop} proved that
if $\mathcal{D}$ is a $t$-design with an even integer $t$ and self-complementary, 
then $\mathcal{D}$ is also a $(t+1)$-design.
Hence $D_{n/2}$ is a $\{ 1,3,5, \ldots, 2s+1\}$-design.
Thus Alltop's theorem gives an alternative proof of 
Propositions~\ref{prop:length 24m+8} (2) \rm (i\hspace{-.1em}i) 
and \ref{prop:length 24m+16} (2) \rm (i\hspace{-.1em}i).
\end{Rem}




\end{document}